\numberwithin{equation}{section}
\newtheorem{theorem}{Theorem}[section]
\newtheorem{lemma}[theorem]{Lemma}
\newtheorem{proposition}[theorem]{Proposition}
\newtheorem{remark}{Remark}[section]
\newcommand{\R}{\mathbb R}
\newcommand{\N}{\mathbb N}
\DeclareMathOperator{\bP}{\mathbf{P}}
\DeclareMathOperator{\bE}{\mathbf{E}}
\renewcommand{\P}{\mathbb{P}}
\newcommand{\E}{\mathbb{E}}
\newcommand{\bd}{{\mathbf d}}
\newcommand{\cB}{{\mathcal B}}
\newcommand{\cD}{{\mathcal D}}
\newcommand{\cF}{{\mathcal F}}
\newcommand{\cH}{{\mathcal H}}
\newcommand{\cI}{{\mathcal I}}
\newcommand{\cN}{{\mathcal N}}
\newcommand{\cO}{{\mathcal O}}
\newcommand{\cR}{{\mathcal R}}
\newcommand{\cT}{{\mathcal T}}
\newcommand{\cU}{{\mathcal U}}
\renewcommand{\tilde}{\widetilde}
\newcommand{\W}{\ensuremath{W}}
\newcommand{\bw}{{\mathbf w}}
\newcommand{\bh}{{\mathbf h}}
\newcommand{\bX}{{\mathbf X}}
\newcommand{\bphi}{{\mathbf \Phi}}
\DeclareMathOperator{\var}{Var}
\DeclareMathOperator{\cov}{Cov}
\newcommand{\given}{\;\big|\;}
\newcommand{\ind}{{\mathbbm{1}}}
\newcommand{\tmix}{t_\textsc{mix}}
\newcommand{\tv}{{\textsc{tv}}}
\newcommand{\nbrw}{{\textsc{nbrw}}}
\newcommand{\red}{{\textsc{red}}}
\begin{document}

\title[A threshold for cutoff]{A threshold for cutoff in two-community\\ random graphs}

\author[ ]{Anna Ben-Hamou}
\address{ \hfill\break
A. Ben-Hamou\hfill\break
Sorbonne Universit\'e, LPSM\hfill\break
4, place Jussieu, 75005 Paris, France.}
\email{anna.ben-hamou@upmc.fr}

\begin{abstract}
In this paper, we are interested in the impact of communities on the mixing behavior of the non-backtracking random walk. We consider sequences of sparse random graphs of size $N$ generated according to a variant of the classical configuration model which incorporates a two-community structure. The strength of the bottleneck is measured by a parameter $\alpha$ which roughly corresponds to the fraction of edges that go from one community to the other. We show that if $\alpha\gg \frac{1}{\log N}$, then the non-backtracking random walk exhibits cutoff at the same time as in the one-community case, but with a larger cutoff window, and that the distance profile inside this window converges to the Gaussian tail function. On the other hand, if $\alpha \ll \frac{1}{\log N}$ or $\alpha \asymp \frac{1}{\log N}$, then the mixing time is of order $1/\alpha$ and there is no cutoff.
\end{abstract}

\keywords{nonbacktracking random walk, random graphs, mixing times, cutoff, bottleneck}
\subjclass[2010]{Primary 60J10; Secondary 05C80,05C81}

\maketitle
\section{Introduction}

\subsection{Setting.}

We consider an extension of the classical configuration model, designed to incorporate a two-community structure. Let $V$ be a vertex set partitioned into two non-empty \emph{communities} $V_0$ and $V_1$, i.e.
$$
V=V_0\cup V_1 \qquad \textrm{ and } \qquad V_0\cap V_1=\emptyset\, .
$$
Let $\bd :V\to \N\setminus\{0,1\}$ be a fixed degree sequence such that
$$
\sum_{v\in V_0}\bd(v)=N_0, \quad \textrm{ and } \quad \sum_{v\in V_1}\bd(v)=N_1
$$
are both even. Let $N=N_0+N_1$. Each vertex $v$ of $V$ is endowed with $\bd(v)$ \emph{half-edges}, and, for $i=0,1$, we denote by $\cH_i$ the set of half-edges attached to a vertex of $V_i$, and let $\cH=\cH_0\cup\cH_1$. By definition, $|\cH_0|=N_0$, $|\cH_1|=N_1$ and $|\cH|=N$. 

Now let $p$ be a fixed even integer between $2$ and $\min \{N_0, N_1\}$ and choose uniformly at random $p$ distinct half-edges in $\cH_0$ to form the random subset of \emph{outgoing} half-edges of $\cH_0$. Similarly, and independently, choose uniformly at random $p$ distinct half-edges in $\cH_1$ to form the random subset of outgoing half-edges of $\cH_1$. Half-edges which are not outgoing are called \emph{internal} half-edges. Let $\alpha_0=p/N_0$, $\alpha_1=p/N_1$ and $\alpha=\alpha_0+\alpha_1$.

We now generate the random graph $G$ by choosing independently a uniform pairing on the set of internal half-edges of $\cH_0$, a uniform pairing on the set of internal half-edges of $\cH_1$ (this is feasible since both sets have even size), and a uniform matching between the set of outgoing half-edges of $\cH_0$ and the set of outgoing half-edges of $\cH_1$ (which have equal size $p$). We let $\eta$ be the induced pairing on $\cH$. If $x$ and $y$ are two distinct half-edges attached to vertices $u$ and $v$ respectively, then the pairing $\eta(x)=y$ (which is equivalent to $\eta(y)=x$) induces an edge between $u$ and $v$ in the resulting graph.

\begin{remark}
The graph model may very well be defined with $N_0$, $N_1$ and $p$ all odd, the important thing being that $N_0-p$ and $N_1-p$ are both even. However, assuming that $p$ is even is quite convenient for the analysis, in particular in Section~\ref{sec:no-cutoff}.
\end{remark}

We are interested in the mixing properties of the non-backtracking random walk (\nbrw) on $G$, defined as the Markov chain with state space $\cH$ and transition matrix
$$P(x,y)=
\begin{cases}
\frac{1}{\deg(\eta(x))} & \textrm{if $y$ and $\eta(x)$ are neighbors,} \\
0 & \textrm{ otherwise},
\end{cases}
$$
where two half-edges $x$ and $y$ are called \emph{neighbors} if they are attached to the same vertex and are different. The \emph{degree} of half-edge $x$, denoted $\deg(x)$, corresponds to the number of neighbors of $x$ (if $x$ is attached to vertex $u$, then $\deg(x)=\bd(u)-1$). The \nbrw\ thus moves at each step from the current state $x$ to a uniformly chosen neighbor of $\eta(x)$. (Equivalently, the \nbrw\ can be defined over the directed edges of the graph: at each step, it moves from 
$(u,v)$ to $(v,u')$, with $u\neq u'$, with probability $\frac{1}{\bd(v)-1}$. In particular, the \nbrw\ cannot traverse the same edge twice in a row in opposite directions.)

The matrix $P$ enjoys the following symmetry property with respect to $\eta$: for all $x,y\in\cH$, 
  \begin{equation}
\label{sym}
P(\eta(y),\eta(x)) = P(x,y)\, .
\end{equation}
In particular, $P$ is doubly stochastic and the stationary distribution of the chain is the uniform distribution $\pi$ on $\cH$. The worst-case total-variation distance to equilibrium at time $t\geq 0$ is 
 \begin{eqnarray*}
%\label{cd}
\cD(t)   =  \max_{x\in\cH}\cD_x(t),\quad \textrm{where}\quad\cD_x(t)=\sum_{y\in\cH}\left( \frac{1}{N}- P^t(x,y)\right)_+\, .
\end{eqnarray*}
This quantity is weakly decreasing in $t$, and the first time when it falls below a given threshold $0<\varepsilon<1$ is the $\varepsilon$-mixing time:
$$
\tmix(\varepsilon) = \inf\left\{t\geq 0,\,  \cD(t)<\varepsilon\right\}\, .
$$

\subsection{Results.}
Let
\begin{equation}
\label{eq:def-mu-sigma}
\mu=\frac{1}{N}\sum_{x\in\cH} \log\deg(x) \quad\textrm{and}\quad\sigma^2=\frac{1}{N}\sum_{x\in\cH} (\log\deg(x)-\mu)^2
\end{equation}
be the mean and variance of the logarithmic degree of a uniformly chosen half-edge. For $i\in\{0,1\}$, let also
\begin{equation}
\label{eq:def-mu0-mu1}
\mu_i=\frac{1}{N_i}\sum_{x\in\cH_i} \log\deg(x) \quad\textrm{and}\quad\sigma_i^2=\frac{1}{N_i}\sum_{x\in\cH_i} (\log\deg(x)-\mu_i)^2
\end{equation}
be the mean and variance of the logarithmic degree within community $\cH_i$.

We consider a sequence $(G_n)_{n\geq 1}$ of graphs distributed according to this model, with $N\to\infty$ as $n\to\infty$ (the index $n$ will be omitted from notation) and are interested in the following regime:
\begin{subequations}
\label{assumptions}
    \begin{align}
        & \alpha_1+\alpha_0\leq 1\,  &\mbox{{\small (there is a community structure)}}   \label{eq:assumption:p} \\
      & N_0\asymp N_1 \asymp N\,  &\mbox{{\small (communities have comparable size)}}   \label{eq:assumption-N0-N1}\\
      & \liminf \sigma^2>0\, , &\mbox{{\small (non-vanishing variance)}} \label{eq:assumption-sigma}\\
      & \min_{v\in V} \bd(v)\geq 3\,  &\mbox{{\small (branching degrees)}}   \label{eq:assumption-min-degree}\\
      & \Delta=\max_{v\in V} \bd(v)=O(1)\,  &\mbox{{\small (sparse regime)}}   \label{eq:assumption-Delta}
    \end{align}
\end{subequations}
To see why condition~\eqref{eq:assumption:p} corresponds to the presence of community structure, observe that when $\eta$ is a uniform pairing over $\cH$, then the expected number of pairs between one element of $\cH_0$ and one element of $\cH_1$ is equal to $\frac{N_0N_1}{N-1}$. In expectation, the analogue of $\alpha_0+\alpha_1$ is then equal to $\frac{N}{N-1}\approx 1$.

For the first part of our result, we need the following additional assumption
\begin{equation}\label{eq:assumption:mu_0-mu_1}
\text{Either } \quad \liminf|\mu_0-\mu_1|>0 \quad \text{ or } \quad |\mu_0-\mu_1|^2=o(\alpha) \, .
\end{equation}

\begin{theorem}\label{thm:cutoff}
Under assumptions~\eqref{assumptions} and~\eqref{eq:assumption:mu_0-mu_1}, if $\alpha\gg \frac{1}{\log N}$, then for all $\varepsilon\in (0,1)$,
$$
\frac{\tmix(\varepsilon)-\frac{\log N}{\mu}}{\sqrt{\frac{\nu^2\log N}{\mu^3}}} \overset{\P}\longrightarrow  \overline{\Phi}^{-1}(\varepsilon)\, ,
$$
where
$$
\nu^2=\sigma^2+\frac{2N_0N_1(1-\alpha)}{N^2}\frac{(\mu_0-\mu_1)^2}{\alpha}\, ,
$$
and $\overline{\Phi}$ is the tail function of the standard normal distribution ($\overline{\Phi}=1-\Phi$ with $\Phi$ the c.d.f. of the standard normal distribution).
\end{theorem}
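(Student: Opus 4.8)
The plan is to run the entropic argument for nonbacktracking walks on sparse random graphs, as developed for the ordinary configuration model by Ben-Hamou and Salez, isolating the single place where the two-community structure changes the answer. Fix a starting half-edge $x$, let $(X_s)_{s\ge0}$ be the \nbrw\ with $X_0=x$, and use the identity $\cD_x(t)=\bE_x\big[(1-e^{-I_t})_+\big]$ with $I_t=\log\big(NP^t(x,X_t)\big)$, valid because $\pi$ is uniform and $\bE_x[e^{-I_t}]=1$. The goal is to show that, for $t=t_\lambda:=\tfrac{\log N}{\mu}+\lambda\sqrt{\tfrac{\nu^2\log N}{\mu^3}}$ and any fixed $\lambda$, $\cD_x(t_\lambda)\to\overline\Phi(\lambda)$ in probability over $G$, uniformly in $x$; inverting $\lambda\mapsto\overline\Phi(\lambda)$ then yields the statement for $\tmix(\varepsilon)$. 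The first block of work is structural and nearly identical to the one-community case: \whp\ balls of radius $\Theta(\log\log N)$ are trees, the graph is an expander, cycles are sparse, and — using the middle-sphere decomposition $P^t(x,y)=\sum_z P^{t_1}(x,z)P^{t_2}(z,y)$ (with $t_1+t_2=t$, $t_1\approx t_2$), the symmetry \eqref{sym} to make the factor $P^{t_2}(z,y)=P^{t_2}(\eta(y),\eta(z))$ tree-like from $\eta(y)$ as well, and a first/second-moment estimate on the resulting path-products — one gets $\cD_x(t)=\bP_x\big(S_t<\log N\big)+o(1)$, where $S_t$ is the log-degree accumulated along the trajectory and the threshold $\log N$ reflects that the $t$-sphere saturates at size $\asymp N$. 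Planting a partition with crossing fractions $\alpha_0,\alpha_1$ leaves the branching data unchanged, so these estimates port with only cosmetic modifications: the local limit is the same Galton--Watson tree, now carrying a $\{0,1\}$-label that flips across each edge with probability $\asymp\alpha_i$.

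Everything then reduces to a central limit theorem for $S_t$, and this is where $\nu^2$ enters. Write $\xi_s\in\{0,1\}$ for the community of $X_s$. In the edge-by-edge exploration of the configuration model a freshly revealed half-edge of $\cH_i$ is outgoing with probability $(1+o(1))\alpha_i$, and the bounded-range corrections present just after a crossing do not affect the limiting occupation-time variance; so $(\xi_s)$ behaves like the two-state Markov chain with crossing probabilities $\alpha_0,\alpha_1$, i.e.\ with stationary weights $N_i/N$ and spectral gap $\alpha$. Conditionally on $(\xi_s)_{s\le t}$ the increments $\log\deg(X_s)-\mu_{\xi_s}$ are asymptotically independent and centered with variances $\sigma^2_{\xi_s}$, so $\sum_s(\mu_{\xi_s}-\mu)$ and $\sum_s(\log\deg(X_s)-\mu_{\xi_s})$ have vanishing cross-covariance and a martingale CLT gives $S_t-t\mu\approx(\mu_0-\mu_1)\big(T_0-\bE T_0\big)+\mathcal N\big(0,t\bar\sigma^2\big)$, with $T_0=\#\{s\le t:\xi_s=0\}$ and $\bar\sigma^2=\tfrac{N_0}{N}\sigma_0^2+\tfrac{N_1}{N}\sigma_1^2$. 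The occupation-time CLT for the two-state chain yields $\var(T_0)=(1+o(1))\,t\,\tfrac{N_0N_1}{N^2}\,\tfrac{2-\alpha}{\alpha}$, valid exactly because $\alpha t\to\infty$, i.e.\ $\alpha\gg1/\log N$; together with the algebraic identity $\sigma^2=\bar\sigma^2+\tfrac{N_0N_1}{N^2}(\mu_0-\mu_1)^2$ this gives $\var(S_t)=(1+o(1))\,\nu^2 t$, hence $S_t\Rightarrow\mathcal N(t\mu,\nu^2 t)$ and $\bP_x(S_t<\log N)\to\overline\Phi(\lambda)$ at $t=t_\lambda$. Assumption \eqref{eq:assumption:mu_0-mu_1} is used precisely here: if $\liminf|\mu_0-\mu_1|>0$ the two Gaussian pieces are cleanly of different orders and combine without interference, while if $|\mu_0-\mu_1|^2=o(\alpha)$ the occupation-time piece is negligible and $\nu^2=\sigma^2+o(1)$; the borderline regime in which the pieces are comparable is exactly what the assumption rules out.

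It remains to make the convergence uniform in $x$ and to dispose of the $e^{-I_t}$ correction. On $\{I_t>0\}$ one has $I_t=\log N-S_t+o(1)$, which conditioned on being positive is typically of order $\sqrt{\nu^2 t}\to\infty$; hence $\bE_x[e^{-I_t}\ind_{I_t>0}]=o(1)$ and $\cD_x(t)=\bP_x(I_t>0)+o(1)=\bP_x(S_t<\log N)+o(1)$. Only the first $O(\log\log N)$ steps of the walk feel $x$, contributing $o(\sqrt{\log N})$ to $S_t$; and the starting community shifts $\bE T_0$ by at most the relaxation time $O(1/\alpha)$, which under $\alpha\gg1/\log N$ satisfies $|\mu_0-\mu_1|/\alpha=o\big(\sqrt{\nu^2\log N}\big)$ and is thus negligible against the window. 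Therefore $\max_x\cD_x(t_\lambda)\to\overline\Phi(\lambda)$, while $\cD(t_\lambda)\ge\overline\Phi(\lambda)-o(1)$ follows by applying the same estimates to a typical $x$.

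The main obstacle is the central limit theorem of the second paragraph: one must jointly control the within-community degree fluctuations and the occupation-time fluctuations of the community label, prove they decouple into a single Gaussian of variance $\nu^2 t$, and check that the two-community projection of the \nbrw\ is close enough to a genuine two-state chain for its occupation-time variance to be the claimed $\tfrac{N_0N_1}{N^2}\tfrac{2-\alpha}{\alpha}\,t$. The structural inputs of the first paragraph, though lengthy, are morally those of the one-community theorem; the one extra point to watch there is that the walk now spends long ($\asymp 1/\alpha$) stretches inside one community, so one must verify it does not get trapped in an atypically small or large sub-region before the two middle-sphere trajectories meet.
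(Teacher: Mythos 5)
Your architecture is essentially the paper's: reduce to nice starting points, use the symmetry \eqref{sym} to write $P^t(x,\eta(y))$ as a middle-point decomposition (this is exactly \eqref{eq:crucial}), and boil everything down to a CLT for the accumulated log-degree of an annealed trajectory whose community label behaves like the two-state chain with flip probabilities $\alpha_0,\alpha_1$; your variance computation via $\var(T_0)\sim t\,\tfrac{N_0N_1}{N^2}\tfrac{2-\alpha}{\alpha}$ and $\sigma^2=\bar\sigma^2+\tfrac{N_0N_1}{N^2}(\mu_0-\mu_1)^2$ correctly reproduces \eqref{eq:def-v}. The genuine gap is that the step you yourself label the main obstacle is precisely the paper's central technical input, Lemma~\ref{lem:berry-esseen}, and you neither prove it nor identify correctly what it must deliver. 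What is needed is not a qualitative CLT but a normal approximation for a triangular array (the chain changes with $N$), quantitative and uniform in the starting half-edge, because it is fed into repeated-walk/exponential-moment arguments (Lemmas~\ref{lem:walk-in-tree} and~\ref{lem:final}) and must beat the coupling error $O(t^2/N)$ of \eqref{eq:coupling}. The paper obtains it either from a spectral Berry--Esseen bound, whose error is of order $1/(\nu^3\gamma_\star^2\sqrt t)$ with $\gamma_\star=\alpha$ and hence degenerates as $\alpha\to0$ unless $|\mu_0-\mu_1|$ stays bounded away from $0$, or, when $|\mu_0-\mu_1|^2=o(\alpha)$, from Stein's method of exchangeable pairs in which the occupation-time term is a perturbation $\xi_t$ with $\E[\xi_t^2]^{1/2}=O(|\mu_0-\mu_1|/\sqrt{\alpha})=o(1)$. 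So assumption \eqref{eq:assumption:mu_0-mu_1} is a technical requirement of these quantitative methods (the paper expects the theorem without it), not a device to rule out ``interference'' of comparable Gaussian pieces: with $\alpha\asymp1$ and $|\mu_0-\mu_1|\asymp1$ the two pieces are comparable and the theorem still applies. Indeed, your sketched route (conditional Lindeberg given the labels plus an occupation-time CLT) would, as described, never use the assumption at all, which is inconsistent with your claim of where it enters; to become a proof it would have to be executed as an array statement with explicit, uniform error control.

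A second soft spot: ``a first/second-moment estimate on the resulting path-products'' would not give the upper bound, which must hold simultaneously for all $\asymp N^2$ pairs of roots $(x,y)$, hence requires failure probabilities $o(1/N^2)$. The paper gets these from exponential concentration: Chatterjee's Stein-type bound for the uniform matching of the $p$ crossing half-edges (here $p\asymp\alpha N\gg N/\log N$ is exactly what keeps the exponent in \eqref{eq:eq2:proof-stein} large after truncating weights at $\theta$), the strong Rayleigh property for the random allocation of types, and the boosting trick with $m\asymp(\log N)^2$ independent walks. Likewise, the equidistribution of leaf weights over the two communities (Lemma~\ref{lem:mix-communities}) and the burn-in of length $\Lambda\alpha^{-1}$ inside the window in Lemma~\ref{lem:final} both use $t\gg 1/\alpha$, i.e.\ $\alpha\gg 1/\log N$, in ways that are genuinely new relative to the one-community case and are not ``cosmetic modifications.''
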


\begin{theorem}\label{thm:no-cutoff}
Under assumptions~\eqref{assumptions}, if $\alpha\ll \frac{1}{\log N}$ or $\alpha\asymp \frac{1}{\log N}$, then for all $x\in\cH_i$ and for all $\varepsilon<\frac{N_{1-i}}{N}$, there exist $a, b>0$ depending only on $\varepsilon$ and $N_i/N$ such that
$$
\frac{a+o_\P(1)}{\alpha}\leq \tmix^{(x)}(\varepsilon)\leq \frac{b+o_\P(1)}{\alpha}\, \cdot 
$$
and there is no cutoff.
\end{theorem}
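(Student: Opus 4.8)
The plan is to prove the two bounds independently and then read off the absence of cutoff from the shape of the distance profile they produce. The underlying mechanism is that mixing inside a single community happens on scale $\log N$, whereas equilibrating the \emph{community label} requires the walk to traverse one of the $p$ outgoing half-edges, an event of probability $\asymp\alpha_i=p/N_i\asymp\alpha$ per step once the walk's position inside its community is spread out; since $1/\alpha\gtrsim\log N$ in this regime, the label coordinate --- behaving essentially like a two-state chain with spectral gap $\asymp\alpha$ --- is the bottleneck, giving a mixing time of order $1/\alpha$ and an exponentially decaying (hence cutoff-free) profile on that scale. The one structural fact used repeatedly is that the only nonzero transitions from $\cH_i$ to $\cH_{1-i}$ start at outgoing half-edges of $\cH_i$, and from such a half-edge the next step lies deterministically in $\cH_{1-i}$; hence the flow out of $\cH_i$ equals $p/N$ and the bottleneck ratio of $\cH_i$ equals $p/N_i=\alpha_i$.

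\emph{Lower bound.} Fix $x\in\cH_i$. From the definition of $\cD_x$, for any set $A$ one has $\cD_x(t)\ge\pi(A)-P^t(x,A)$; taking $A=\cH_{1-i}$ gives $\cD_x(t)\ge\frac{N_{1-i}}{N}-\P_x(X_t\in\cH_{1-i})$. Since reaching $\cH_{1-i}$ requires first occupying an outgoing half-edge of $\cH_i$, we have $\P_x(X_t\in\cH_{1-i})\le\P_x(\tau\le t)$ with $\tau$ the first such time; exploring $G$ by deferred decisions along the trajectory, the walk reveals $O(\Delta t)$ half-edges up to time $t$, each outgoing with probability $\le(1+o(1))\alpha_i$, so $\E\bigl[\P_x^G(\tau\le t)\bigr]=O(\Delta\alpha t)$ uniformly in $x\in\cH_i$. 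Choosing $a=a(\varepsilon,N_i/N)>0$ small enough that $O(\Delta a)<\frac{N_{1-i}}{N}-\varepsilon$, and promoting the in-expectation bound to a \whp\ statement by a concentration argument (a Doob martingale in the random pairing, or a second-moment estimate on $x\mapsto\P_x^G(\tau\le t)$), we get $\cD_x(a/\alpha)>\varepsilon$ \whp; since $t\mapsto\cD_x(t)$ is non-increasing, $\tmix^{(x)}(\varepsilon)\ge(a+o_\P(1))/\alpha$. Here the hypothesis $\varepsilon<N_{1-i}/N$ is exactly what makes such an $a$ exist.

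\emph{Upper bound.} I would run a two-timescale argument. Using the local tree-likeness of the model and the equidistribution of the \nbrw\ within a single community --- as developed for the configuration model and in the proof of Theorem~\ref{thm:cutoff} --- after $O(\log N)$ steps spent inside a community the walk's conditional law is within $o(1)$ of uniform on that community. Simultaneously, from a spread-out position in $\cH_i$ the walk crosses with probability $(1+o(1))\alpha_i$ and, after re-mixing, crosses back with probability $\Theta(\alpha_{1-i})$, so the label process is comparable to the two-state chain with transition probabilities $\alpha_0,\alpha_1$, whose relaxation time is $1/\alpha$. Decomposing $P^t(x,\cdot)$ over the current label and comparing with the analogous decomposition of $\pi$, the triangle inequality bounds $\cD_x(t)$ by $\bigl|\P_x(X_t\in\cH_0)-\frac{N_0}{N}\bigr|$ plus the within-community conditional total-variation error, and for $t=b/\alpha$ with $b=b(\varepsilon)$ large both terms fall below $\varepsilon$ \whp. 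When $\alpha\ll\frac{1}{\log N}$ this is clean because the (residual of the) geometric gap between consecutive crossings exceeds the within-community mixing time \whp; the main obstacle is the borderline case $\alpha\asymp\frac{1}{\log N}$, where the two scales coincide and the walk does not fully re-mix between crossings --- one then argues over the $\Theta(b)$ crossings collectively, e.g.\ by locating the most recent stretch of length $\gtrsim\log N$ spent inside one community before time $t$ (which still exists \whp for $b$ large), at the cost of non-sharp constants.

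\emph{No cutoff.} The two analyses together show that for a worst-case start $x$ the profile satisfies $c_1(1-\alpha)^t\le\cD_x(t)\le c_2(1-\alpha)^t+o(1)$ on the range $t\asymp 1/\alpha$, for constants $0<c_1\le c_2$ (with $c_1=c_2=N_{1-i}/N$ up to $o(1)$ when $\alpha\ll\frac{1}{\log N}$). Hence $\tmix(\varepsilon)=\frac{\log(1/\varepsilon)+O(1)}{\alpha}(1+o(1))$, so picking $\varepsilon_1$ much smaller than a fixed $\varepsilon_2$ (both below $\min\{N_0,N_1\}/N$) yields $\tmix(\varepsilon_1)/\tmix(\varepsilon_2)$ bounded away from $1$ \whp, which is incompatible with cutoff. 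The genuinely delicate point throughout is the two-sided control of the label process --- especially the $O(1)$-distortion description in the borderline regime --- because it requires meshing the fast within-community dynamics with the slow crossing dynamics on overlapping time scales.
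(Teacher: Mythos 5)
Your overall strategy (hitting/conductance lower bound, two-timescale ``label chain'' upper bound) coincides with the paper's for part of the regime, but there is a genuine gap at the borderline case $\alpha\asymp\frac{1}{\log N}$, which the theorem explicitly includes. For $\alpha\ll\frac{1}{\log N}$ your upper-bound scheme is essentially the paper's: there it is implemented as a coupling of two walks started from $\pi_0$ and $\pi_1$, with the time to hit an outgoing half-edge controlled by a Poincar\'e/conductance estimate on the auxiliary one-community graph $G_0$ (Lemma~\ref{lem:passage}), and with the measurability trick of Remark~\ref{rem} (the within-community mixing error $D_x$ depends only on the internal matching of $\cH_0$) to decouple the within-community mixing from the crossing structure. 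But in the borderline regime your proposed patch --- ``locate the most recent stretch of length $\gtrsim\log N$ spent in one community and argue over the crossings collectively'' --- is not an argument: when $\alpha\asymp\frac{1}{\log N}$, a stretch of length $2\log N$ contains a crossing with probability $\Theta(1)$ (indeed $\E[\P^G_x(\tau_0<s)]=O(s\alpha)=O(1)$, not $o(1)$), so the comparison of $G$ with $G_0$ during the stretch is no longer free, and the walk's entry point into that stretch, conditioned on the crossing pattern, is not distributed in any way you control --- exactly the graph--walk dependence you flag but do not resolve. The paper handles this regime by a genuinely different device: the $\ell^2$ bound $\cD_x(t)\le\frac{1}{2}\sqrt{N\sum_y P^t(x,y)^2-1}$, the identity $\E\left[\sum_y P^t(x,y)^2\right]=\P_x(X_t=Y_t)$ for two independent annealed walks, an excess/path-counting estimate of this collision probability (feasible because $t\asymp\log N\ll\sqrt N$ there), and a four-walk variance estimate. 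Some quenched-to-annealed second-moment input of this kind is needed; your sketch does not supply one.

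A secondary issue is the \whp\ step in your lower bound. With $t=a/\alpha$ your estimate $\E[\P^G_x(\tau\le t)]=O(\alpha t)=O(a)$ is a constant, so Markov only gives the desired event with probability $1-O(a)$, not $1-o(1)$; you defer the upgrade to ``a Doob martingale or a second-moment estimate,'' but neither is routine uniformly over the regime: a two-walk annealed second moment works when $\alpha\gg\frac{1}{\sqrt N}$ (this is what the paper does, showing $\var(\P^G_x(X_t\in\cH_0))=o(1)$ via two independent walks), whereas for smaller $\alpha$ one has $t\gg\sqrt N$ and those sequential-generation/collision estimates break down. The paper instead, for $\alpha\ll\frac{1}{\log N}$, first replaces the start $x$ by $\pi_0$ (Lemma~\ref{lem:mix-one-community}, which uses one-community cutoff on $G_0$ and the absence of crossings during the $2\log N$ burn-in) and then applies the deterministic conductance inequality $\P^G_{\pi_0}(X_t\in\cH_1)\le t\,\bphi(\cH_0)=t\alpha_0$, requiring no concentration at all. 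So your choice of distinguishing event $A=\cH_{1-i}$ is the right one, but the \whp\ claim needs one of these two mechanisms, selected according to the size of $\alpha$, rather than a single generic concentration appeal; and your claimed two-sided profile $c_1(1-\alpha)^t\le\cD_x(t)\le c_2(1-\alpha)^t+o(1)$ is stronger than what either argument delivers in the borderline case, though the no-cutoff conclusion only needs the $\varepsilon$-dependence of the two bounds, which is available.
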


Let us briefly comment on the results. It is natural to expect that the presence of communities has an influence on the mixing behavior of the \nbrw. If $\alpha$ is very small, i.e. if there are only few edges that go from one community to the other, then the graph has a very narrow bottleneck and the walk will take a long time to cross this bottleneck. Intuitively, the mixing time in this case is determined by the geometric time needed to hit one of those crossing edges, and the distance then decreases smoothly, as the tail function of a Geometric variable: there is no cutoff. 
%Consider for instance the following toy-example in which two cliques $\cC_0$ and $\cC_1$ of size $n$ are joined by a single edge.  A simple computation shows that the total-variation distance at time $t$ starting from some vertex $x$ in the interior of $\cC_0$ is roughly equal to $\P_x(X_t\in\cC_0)-\frac{1}{2}$. Now, $X_t\in\cC_0$ if and only if the walk has crossed the linking edge an even number of times. The time it takes for the walk to reach $\cC_1$ is approximately distributed according to a Geometric random variable with parameter $1/n^2$ and $\P_x(X_t\in\cC_0)$ can be approximated by the probability that a Binomial random variable with parameter $t$ and $1/n^2$ is even, i.e. $\frac{1}{2}\left(1+ (1-2/n^2)^t\right)$. The $\varepsilon$-mixing time is thus asymptotic to $\frac{n^2\log(1/2\varepsilon)}{2}$ and there is no cutoff. 

On the other hand, if $\alpha$ is large, then the walk can easily go from one community to the other, and the mixing behavior is very similar to the case where there is no community structure, as studied  by B. and Salez~\citep{ben-salez}. In this paper, the authors considered the configuration model with $\eta$ uniformly chosen among all possible pairings on $\cH$. They showed, under much weaker degree assumptions, that the \nbrw\ has cutoff at time $\frac{\log N}{\mu}$, with window $\sqrt{\frac{\sigma^2}{\mu^3}\log N}$ and that the distance profile inside the window is Gaussian. 

The contribution of the present paper is to determine quite precisely the threshold between those two regimes, the one with no community structure and the one with two communities connected by very few edges. As it turns out, cutoff can still occur with a strong community structure, even in a regime where the proportion $\alpha$ of crossing edges vanishes to $0$, provided it decays more slowly than $1/\log N$. This threshold arises as the result of a competition between the mixing time in each community, which is of order $\log N$, and the time it takes to switch community, which is approximately Geometric with expectation of order $1/\alpha$. This result can be interpreted in light of a series of powerful results that relate mixing and hitting times~\citep{oliveira2012mixing,sousi2013hitting,griffiths2014tight} and that characterize cutoff in terms of concentration of hitting times of ``worst'' sets~\citep{basu2015characterization,hermon2015technical}. 

Another interesting fact is the impact of communities on the cutoff window (in the regime $\alpha \gg 1/\log N$). In the case of no community structure, the window is of order $\sqrt{\frac{\sigma^2}{\mu^3}\log N}$, which, under assumptions~\eqref{assumptions}, has order $\sqrt{\log N}$. The introduction of a community structure can significantly increase the cutoff window. Under our assumptions, this window is of order $\sqrt{\frac{\log N}{\alpha}}$, which is still much smaller than $\log N$, the first order of the mixing time, but can be much larger than $\sqrt{\log N}$. Let us also note that, for some fixed value of $\alpha$, the window is maximized for $\alpha_0=\alpha_1$, i.e. for $N_0=N_1$, when the two communities have equal size.

\subsection{Related work.}
A sequence of chains $(P_n)$ is said to exhibit the cutoff phenomenon if for all $\varepsilon\in (0,1)$, $\tmix^{(n)}(\varepsilon)\sim\tmix^{(n)}(1-\varepsilon)$ as $n\to\infty$. In other words, convergence to equilibrium occurs very abruptly: the total-variation distance drops from near $1$ to near $0$ at the mixing time, over a much shorter time known as the cutoff window. It was first observed for random walks on finite groups, such as random transpositions on the symmetric group~\citep{diaconis1981generating}, or the lazy random walk on the hypercube~\citep{aldous1983mixing}. It was then observed in various other contexts, such as the Glauber dynamics on the Ising model at high temperature~\citep{lubetzky2017universality}, or the simple exclusion process~\citep{labbe2016cutoff}. This phenomenon was quickly conjectured to be a widespread phenomenon, satisfied by a large class of finite Markov chains. However, finding simple sufficient conditions for cutoff appeared to be a very challenging task and several conditions that appeared to be ``natural'' have been disproved by counter-examples. For instance, regular expanders of bounded degree have remarkable mixing properties and one could reasonably expect that the (lazy) random walk on such graphs has cutoff, but this was disproved in~\citep{lubetzky2011explicit}. However, one can rather ask: what is the mixing behavior of the random walk on a ``typical'' graph? This led to study random walks on random graphs, uniformly chosen in a given class. In this line of work, the article of \citet{lubetzky2010cutoff} was a breakthrough: they showed that, with high probability, simple and non-backtracking random walks on random $d$-regular graphs have cutoff. Cutoff for \nbrw\ was then established on sparse random graphs with given degrees, by B. and Salez~\citep{ben-salez}, and independently by~\citet{berestycki2018random}, and \citet{bordenave2018random} established the cutoff phenomenon for the random walk on sparse random directed graphs. Recently, \cite{avena2018mixing} \citep{avena2018random} studied \nbrw\ on dynamical random graphs, and established three different mixing behavior according to the rate at which the graph is re-randomized. 

Those random graph models are ``homogeneous'' in the sense that with high probability, they do not give rise to a community structure within vertices. However, various real networks, such as social or biological networks~\citep{girvan2002community}, exhibit a community structure: there is a partition of vertices such that vertices in the same group are more likely to be connected than vertices in different groups. Probably one of the most famous random graph model with a community structure is the \emph{stochastic block model}. This model was first introduced by~\citep{holland1983stochastic}, and was then studied in a wide variety of contexts, in particular in the very rich research area of \emph{community detection} (see~\citep{abbe2018community} for a survey of recent results). Fixed-degree variants of the stochastic block model, often referred to as \emph{hierarchical configuration models}, were introduced and investigated by~\citep{sah2014exploring}, \citep{van2015hierarchical} and~\citep{stegehuis2016epidemic}, with a particular focus on epidemic propagation. The model considered here can be seen as a variant of the hierarchical configuration model with two communities, where randomization is used first to determine which half-edges are outgoing, and then to choose the pairings of internal and outgoing half-edges (degrees, however, are fixed). In his master thesis,  Poir\'ee~\citep{thomas} studied \nbrw\ on such random graphs, in the particular case of regular degrees and communities of equal size.

\subsection{Open questions.}
Several extensions of the model would be interesting to investigate and a lot of related questions could be raised. Let us briefly mention some of them:
\begin{itemize}
\item The regime in \eqref{assumptions} is quite restrictive, it would be interesting to see how far those assumptions can be relaxed (in the one-community case, B. and Salez~\citep{ben-salez} could go up to $\Delta=N^{o(1)}$). We expect that Theorem~\ref{thm:cutoff} holds without assumption~\eqref{eq:assumption:mu_0-mu_1} nor any other assumption on $|\mu_0-\mu_1|$. Nevertheless, condition~\eqref{eq:assumption:mu_0-mu_1} encompasses a variety of situations, including the important case where degrees are \textsc{i.i.d.} according to some distribution over $\{3,\dots, \Delta\}$, in which case the Central Limit Theorem yields $|\mu_0-\mu_1|=O_\P(n^{-1/2})$.
\item Instead of choosing the outgoing half-edges at random, it would be interesting to consider the model where each vertex initially has a fixed number of outgoing and internal half-edges.
\item What happens with more than two communities? Consider for instance the following simple generalization of the model with $K\geq 2$ communities of equal size $N/K$: in each community $\cH_k$, $k\in\{1,\dots, K\}$, choose uniformly at random $K-1$ distinct blocs $(\cB_{k,\ell})_{\ell\neq k}$ of $p$ distinct half-edges. Then, for all $k$ and $\ell\neq k$, choose a uniform matching between $\cB_{k,\ell}$ and $\cB_{\ell,k}$, and a uniform pairing over $\cH_k\setminus (\cup_{\ell\neq k}\cB_{k,\ell})$. Letting $\alpha=pK^2/N$, we expect (at least in when $K$ is fixed) that if $\alpha\gg \frac{1}{\log N}$, then there is cutoff at $\frac{\log N}{\mu}$ with window $\sqrt{\frac{\nu^2}{\mu^3}\log N}$ where
\[
\nu^2= \sigma^2 +\frac{2(1-\alpha)}{K^2 \alpha} \left\{ (K-1) \sum_{k=1}^K \mu_i^2 -\sum_{k\neq \ell} \mu_k\mu_\ell \right\}\, .
\]
\item What happens for the simple random walk?
\end{itemize}

\section{A useful coupling.}
\label{sec:coupling}

Before entering into the proofs, we describe a coupling for typical non-backtracking trajectories, which helps approximating the annealed law of the walk and will be crucially used later on. This coupling takes advantage of the fact that the \nbrw\ started at a given $x\in\cH$ and the graph along its trajectory can be generated simultaneously as follows: initially, $X_0=x\in\cH$, all half-edges are unpaired and no type has been allocated yet (the property of a half-edge to be outgoing or internal will be referred to as its type); then at each time $k\geq 0$, 
\begin{enumerate}
\item 
\begin{enumerate}
\item if the type of $X_{k}$ has not been fixed yet and if $X_{k}$ belongs to $\cH_i$ for $i=0,1$, we make $X_{k}$ outgoing with probability $\alpha_{i}^{(k)}$ corresponding to the conditional probability that $X_k$ is outgoing given the past. With probability $1-\alpha_{i}^{(k)}$, we make $X_{k}$ internal;
\begin{enumerate}
\item if $X_{k}$ is outgoing, we pair it with a uniformly chosen unpaired half-edge of $\cH_{1-i}$ and declare that this chosen half-edge is outgoing;
\item if $X_{k}$ is internal, we pair it with a uniformly chosen other unpaired half-edge of $\cH_{i}$ and declare that this chosen half-edge is internal;
\end{enumerate}
\item if the type of $X_{k}$ has already been set, then $\eta(X_{k})$ is already defined and no new pair is formed;
\end{enumerate} 
\item in both cases, once $\eta(X_{k})$ is determined, its neighbors are deterministically given by the degree sequence, and we let $X_{k+1}$ be a uniformly chosen neighbor of $\eta(X_{k})$.
\end{enumerate}
 The sequence $\{X_k\}_{k\geq 0}$ is then exactly distributed according to the annealed law. Now, consider a sequence $\{X_k^\star\}_{k\geq 0}$ generated in the following way: initially $X_0^\star=x\in\cH$; then at each time $k\geq 0$, 
\begin{enumerate}
\item if $X_{k}^\star$ belongs to $\cH_i$ for $i=0,1$, draw a Bernoulli random variable $B_k$ with parameter $\alpha_i=p/N_i$;
\begin{enumerate}
\item if $B_k=1$, let $\eta(X_k^\star)$ be a uniformly chosen half-edge in $\cH_{1-i}$;
\item if $B_k=0$, let $\eta(X_k^\star)$ be a uniformly chosen half-edge in $\cH_{i}$;
\end{enumerate} 
\item in both cases, let $X_{k+1}^\star$ be a uniformly chosen neighbor of $\eta(X_{k}^\star)$.
\end{enumerate}
 
The process $\{X_k\}_{k\geq 1}$ and the simple Markov chain $\{X_k^\star\}_{k\geq 1}$ can be coupled in such a way that the two sequences are equal up to the first time $k$ where either the types of $X_k$ and $X_k^\star$ differ, or the two types are equal but the uniformly chosen half-edge $\eta(X_{k}^\star)$ is already paired. The total-variation distance between the type indicators at time $k$ is smaller than $\max_{i=0,1}|\alpha_i^{(k)}-\alpha_i |$. Using the facts that at least $(p-k)\vee 0$ half-edges remain to be made outgoing in each community, that there are at least $N_i-2k$ unpaired half-edges in $\cH_i$, and that $p\leq \min\{N_0, N_1\}$, we have, for all $k <\min\{N_0, N_1\}/2$,
$$
\frac{-k}{N_i}\leq \alpha_i^{(k)}-\alpha_i \leq \frac{2k}{N_i-2k}\, \cdot 
$$
Also, as there are less than $2k$ paired half-edges by step $k$, the probability that $\eta(X_{k}^\star)$ is already paired is less than $\max_{i=0,1} 2k/N_i$. Letting $T$ be the first time where the two coupled sequence differ and using a crude union-bound yields
\begin{equation}
\label{eq:coupling}
\P\left(T\leq t\right)=O\left(\frac{t^2}{N}\right)\, ,
\end{equation}
by~\eqref{eq:assumption-N0-N1}.  The distribution of $\{X_k^\star\}_{k\geq 1}$ is much simpler than that of $\{X_k\}_{k\geq 1}$: at each step, draw a Bernoulli random variable whose parameter depends on the current community. If it is equal to $1$, move to a uniform half-edge from the other community; if it is equal to $0$, move to a uniform half-edge from the same community. It is not hard to check that the stationary distribution of this Markov chain is uniform over $\cH$.

Letting $S_t=\sum_{k=1}^t \log\deg(X_k^\star)$, we have the following Central Limit Theorem: for all $x\in\cH$ and $\lambda\in\R$,
\begin{eqnarray*}
\P_x\left(\frac{S_t -\mu t}{\nu\sqrt{t}}\leq \lambda\right) & \underset{t\to\infty}\longrightarrow & \Phi(\lambda)\, ,
\end{eqnarray*}
where 
\begin{eqnarray*}
\nu^2&=& \lim_{t\to\infty}\frac{1}{t} \var_{\pi}(S_t)= \sigma^2 +2\sum_{s=1}^{+\infty} \cov_{\pi}\left(\log\deg(X_0^\star),\log\deg(X_s^\star)\right)\, .
\end{eqnarray*}
In the definition above, the subscript $\pi$ means that $X_0^\star\sim\pi$. We have
\begin{eqnarray*}
\nu^2&=& \sigma^2 +2 \sum_{x,y\in\cH}\frac{1}{N}\sum_{s=1}^{+\infty} \left(\P_x(X_s^\star=y)-\frac{1}{N}\right)\log\deg(x)\log\deg(y)\, .
\end{eqnarray*}
Note that for all $i,j\in\{0,1\}$, for all $x\in\cH_i$ and $y\in\cH_j$, we have 
$$
\P_x(X_s^\star =y)=\frac{\P_{\pi_i}(X_s^\star\in \cH_j)}{N_j}\, ,
$$
where $\pi_i$ is the uniform distribution over $\cH_i$, hence
\[
\nu^2=\sigma^2 +2\sum_{(i,j)\in\{0,1\}^2}\frac{N_i\mu_i\mu_j}{N}\sum_{s=1}^{+\infty} \left( \P_{\pi_i}(X_s^\star\in \cH_j)-\frac{N_j}{N}\right)\, .
\]
Noticing that the sequences $\left(\P_{\pi_0}(X_s^\star \in\cH_0)\right)_{s\geq 0}$ and $\left(\P_{\pi_1}(X_s^\star \in\cH_1)\right)_{s\geq 0}$ obey the following induction relations
$$
\left\{
\begin{array}{ll}
\P_{\pi_0}(X_s^\star \in\cH_0)&\!\!\!= (1-\alpha_0)\P_{\pi_0}(X_{s-1}^\star \in\cH_0)+\alpha_0\left(1-\P_{\pi_1}(X_{s-1}^\star \in\cH_1)\right)\, ,\\
\P_{\pi_1}(X_s^\star \in\cH_1)&\!\!\!= (1-\alpha_1)\P_{\pi_1}(X_{s-1}^\star \in\cH_1)+\alpha_1\left(1-\P_{\pi_0}(X_{s-1}^\star \in\cH_0)\right)\, ,
\end{array}
\right.
$$
we obtain
\begin{equation}
\label{eq:prob-coupling}
\left\{
\begin{array}{ll}
\P_{\pi_0}(X_s^\star \in\cH_0)&= \frac{\alpha_0(1-\alpha_0-\alpha_1)^s+\alpha_1}{\alpha_0+\alpha_1}=\frac{N_0}{N} + \frac{N_1}{N}(1-\alpha)^s\, ,\\
\P_{\pi_1}(X_s^\star \in\cH_1)&= \frac{\alpha_1(1-\alpha_0-\alpha_1)^s+\alpha_0}{\alpha_0+\alpha_1}=\frac{N_1}{N} + \frac{N_0}{N}(1-\alpha)^s\, ,
\end{array}
\right.
\end{equation}
which yields
\begin{equation}\label{eq:def-v}
\nu^2=\sigma^2+\frac{2N_0N_1(1-\alpha)}{N^2}\frac{(\mu_0-\mu_1)^2}{\alpha}\, \cdot 
\end{equation}
We will also need a quantitative control on the CLT normal approximation, in the form of Berry-Esseen type bound.

\begin{lemma}\label{lem:berry-esseen}
Under assumptions~\eqref{assumptions} and~\eqref{eq:assumption:mu_0-mu_1}, for all $x\in\cH$ and $t \gg \frac{1}{\alpha}$,
\begin{eqnarray*}
\sup_{\lambda\in\R}\left| \P_x\left(\frac{S_t -t\mu}{\nu\sqrt{t}}\leq \lambda\right) -\Phi(\lambda) \right| &=& o(1)\, \cdot 
\end{eqnarray*}
\end{lemma}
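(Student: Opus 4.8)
The plan rests on the product structure of the chain $\{X_k^\star\}$. First I would record that the community process $C_k^\star:=\ind\{X_k^\star\in\cH_1\}$ is itself a two-state Markov chain, started from the community of $x$, with transition probabilities $\alpha_0$ out of state $0$ and $\alpha_1$ out of state $1$ (hence stationary distribution $(N_0/N,N_1/N)$), and that --- crucially --- \emph{given the whole community trajectory $(C_0^\star,\dots,C_t^\star)$} the half-edges $X_1^\star,\dots,X_t^\star$ are independent with $X_k^\star$ uniform over $\cH_{C_k^\star}$: indeed, in the $\star$-chain $\eta(X_k^\star)$ is genuinely uniform over $\cH_{C_{k+1}^\star}$, so a uniform neighbour of it is uniform over $\cH_{C_{k+1}^\star}$ regardless of $X_k^\star$. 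Letting $M_t$ be the number of $k\in\{1,\dots,t\}$ with $C_k^\star=0$, this gives the orthogonal decomposition $S_t=Y_t+Z_t$ with $Z_t:=\mu_1 t+(\mu_0-\mu_1)M_t$ measurable with respect to the community trajectory and $Y_t:=\sum_{k=1}^t(\log\deg(X_k^\star)-\mu_{C_k^\star})$ a conditional sum of independent centred variables, bounded by $\log(\Delta-1)$ by~\eqref{eq:assumption-Delta}, with conditional variance $W_t:=\sigma_0^2M_t+\sigma_1^2(t-M_t)$. Since $\E[Y_t\mid(C_k^\star)_k]=0$ we get $\cov_x(Y_t,Z_t)=0$, hence $\var_x(S_t)=\E_x[W_t]+(\mu_0-\mu_1)^2\var_x(M_t)$; using \eqref{eq:prob-coupling}, $t\gg 1/\alpha$ and \eqref{eq:assumption-N0-N1} this equals $\nu^2 t(1+o(1))$ with $\nu^2$ as in \eqref{eq:def-v}, and likewise $\E_x[S_t]=t\mu+e_t$ with $|e_t|=O(|\mu_0-\mu_1|/\alpha)$. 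Here assumption~\eqref{eq:assumption:mu_0-mu_1} is used --- in both of its cases, together with $\liminf\sigma^2>0$ --- to guarantee $e_t=o(\nu\sqrt t)$, i.e. that $t\mu$ is the correct centring.

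The two analytic inputs are then: (i) the classical Berry--Esseen inequality for sums of independent, not identically distributed summands, which applied conditionally on the community trajectory and on the event $G_t:=\{W_t\ge c\,t\}$ gives $\sup_{m}|\P(Y_t\le m\mid(C_k^\star)_k)-\Phi(m/\sqrt{W_t})|=O(1/\sqrt t)$ (the summands being bounded, the Lyapunov ratio is $O(t)/(ct)^{3/2}=O(1/\sqrt t)$); and (ii) a central limit theorem with Kolmogorov-distance control for the additive functional $M_t$, namely $\zeta_t:=(M_t-\E_xM_t)/\sqrt{\var_xM_t}$ converges in distribution to $\cN(0,1)$, with $\var_xM_t\asymp t/\alpha$. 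Since the two-state chain depends on $n$ through $\alpha\to 0$, input (ii) is a triangular-array statement; I would obtain it from the martingale decomposition $M_t-\E_xM_t=\text{(martingale with increments }O(1/\alpha)\text{, quadratic variation }\asymp t/\alpha)+O(1/\alpha)$ afforded by the Poisson equation of the two-state chain, the Lindeberg ratio being $O(1/\sqrt{t\alpha})\to 0$ (or, alternatively, from the obvious regeneration structure). On $G_t^c$, which has probability $o(1)$ whenever $\liminf\bar\sigma^2>0$ --- writing $\bar\sigma^2:=(N_0\sigma_0^2+N_1\sigma_1^2)/N$, so that $\max(\sigma_0^2,\sigma_1^2)\ge\bar\sigma^2$ and $M_t,t-M_t\asymp t$ by (ii) --- one just uses $0\le\P(Y_t\le m\mid\cdot)\le 1$.

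To combine, I would condition on the community trajectory and integrate: with $a=t\mu+\nu\sqrt t\,\lambda$,
$$
\P_x(S_t\le a)=\E_x\!\left[\Phi\!\left(\frac{a-Z_t}{\sqrt{W_t}}\right)\right]+o(1),\qquad\text{uniformly in }a.
$$
Using $W_t/\E_xW_t=1+O_\P(1/\sqrt{t\alpha})$ together with the elementary \emph{saturation} bound $|\Phi(cv)-\Phi(v)|\le C|c-1|$, valid uniformly in $v\in\R$ for $c$ in a fixed neighbourhood of $1$, one may replace $W_t$ by $\bar w_t:=\E_xW_t$ at cost $o(1)$; this bound (rather than naive Lipschitzness) is essential because $(a-Z_t)/\sqrt{\bar w_t}$ is typically of order $\sqrt{1+r_t}$ with $r_t:=(\mu_0-\mu_1)^2\var_x(M_t)/\bar w_t$ possibly diverging. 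Writing then $(a-Z_t)/\sqrt{\bar w_t}=u_t-\sqrt{r_t}\,\zeta_t$ with $u_t:=(a-\E_xZ_t)/\sqrt{\bar w_t}$, the exact identity $\E[\Phi(u-\sqrt r\,Z)]=\Phi(u/\sqrt{1+r})$ for $Z\sim\cN(0,1)$, combined with the integration-by-parts estimate $|\E g(\zeta_t)-\E g(Z)|\le\sup_m|\P(\zeta_t\le m)-\Phi(m)|$ for bounded monotone $g$ (applied to $g=\Phi(u_t-\sqrt{r_t}\,\cdot\,)$), yields
$$
\E_x\!\left[\Phi\!\left(\tfrac{a-Z_t}{\sqrt{W_t}}\right)\right]=\Phi\!\left(\frac{a-\E_xZ_t}{\sqrt{\bar w_t+(\mu_0-\mu_1)^2\var_x(M_t)}}\right)+o(1)=\Phi\!\left(\frac{a-\E_xS_t}{\sqrt{\var_xS_t}}\right)+o(1).
$$
Replacing $\E_xS_t$ by $t\mu$ and $\var_xS_t$ by $\nu^2t$ costs $o(1)$ uniformly in $\lambda$ --- again by the saturation bound, since $\E_xS_t-t\mu=o(\nu\sqrt t)$ and $\var_xS_t=\nu^2t(1+o(1))$ --- which is the assertion. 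The remaining case $\bar\sigma^2\to 0$ is easier: by $\liminf\sigma^2>0$ and $\sigma^2=\bar\sigma^2+\tfrac{N_0N_1}{N^2}(\mu_0-\mu_1)^2$ it forces $\liminf|\mu_0-\mu_1|>0$, whence $\var_xY_t=\E_xW_t=o(t)=o(\nu^2t)$ while $\var_xZ_t=\nu^2t(1+o(1))$, so $S_t=Z_t+o_\P(\nu\sqrt t)$ and the statement reduces to input (ii).

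The hard part is the combination step. The two Gaussian fluctuations --- the within-community log-degree fluctuations ($Y_t$, Gaussian conditionally on the communities) and the community-switching fluctuations ($Z_t$, driven by $M_t$) --- are genuinely dependent, through $W_t$ being a function of $M_t$; although this dependence is asymptotically negligible (the relative fluctuation of $W_t$ is $O(1/\sqrt{t\alpha})$), making it precise \emph{uniformly in $\lambda$}, and in particular handling the regime $r_t\to\infty$ where the argument of $\Phi$ blows up, forces one to rely on the flatness of $\Phi$ in the tails rather than Lipschitz bounds, and to notice that the superposition of the two Gaussian scales collapses exactly through $\E[\Phi(u-\sqrt r\,Z)]=\Phi(u/\sqrt{1+r})$. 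Checking that the non-stationarity bias $e_t$ is absorbed into the centring $t\mu$ is the other point where hypothesis~\eqref{eq:assumption:mu_0-mu_1} is indispensable.
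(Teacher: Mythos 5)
Your argument is correct in substance, but it takes a genuinely different route from the paper. The paper's proof splits according to assumption~\eqref{eq:assumption:mu_0-mu_1}: when $\liminf|\mu_0-\mu_1|>0$ it quotes a Berry--Esseen theorem for Markov chains (Lezaud/Mann), controlling the error through the explicit spectral gap $\gamma_\star=\alpha$ of $(X_k^\star)$ and the bound $\alpha\nu^2\gtrsim \alpha\sigma^2+(1-\alpha)(\mu_0-\mu_1)^2$; when $|\mu_0-\mu_1|^2=o(\alpha)$ it instead runs Stein's method of exchangeable pairs (resampling one coordinate uniformly within its community), the extra term $\xi_t$ in the linear-regression identity being $O(|\mu_0-\mu_1|/\sqrt{\alpha})=o(1)$. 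You bypass both external inputs by exploiting the exact product structure of the $\star$-chain: your observation that, conditionally on the community trajectory, the $X_k^\star$ are independent and uniform on their respective communities is correct (a uniform neighbour of a uniform half-edge of $\cH_j$ is indeed uniform on $\cH_j$), and it yields the clean decomposition $S_t=Y_t+Z_t$, a classical Berry--Esseen bound conditionally on the trajectory, a CLT for the occupation time $M_t$ of the two-state community chain, and the convolution identity $\E[\Phi(u-\sqrt{r}Z)]=\Phi(u/\sqrt{1+r})$ to merge the two Gaussian scales; your insistence on the saturation bound $|\Phi(cv)-\Phi(v)|\le C|c-1|$ rather than Lipschitz continuity is exactly what is needed when $r_t\to\infty$. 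What your route buys is a single unified argument which, as far as I can see, does not use~\eqref{eq:assumption:mu_0-mu_1} at all: since $|e_t|=O(|\mu_0-\mu_1|/\alpha)$ and $\nu^2\gtrsim\max\{\sigma^2,\,(1-\alpha)(\mu_0-\mu_1)^2/\alpha\}$, one gets $e_t=o(\nu\sqrt{t})$ from $t\alpha\to\infty$ alone, so your closing claim that the hypothesis is ``indispensable'' for the centring is too pessimistic --- your argument would in fact cover the intermediate regime that the paper's two cases leave out. What the paper's route buys is that its two inputs are quotable theorems, whereas your input (ii) --- a Kolmogorov-distance CLT for $M_t$ in the triangular-array regime $\alpha=\alpha_n\to0$, $t\alpha\to\infty$ --- is only sketched and must be written out (the Poisson-equation/martingale or regeneration argument you indicate does work, with Lindeberg ratio $O(1/\sqrt{t\alpha})$, and Polya's theorem upgrades weak convergence to uniform convergence of the distribution functions). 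Two small points for a final write-up: run the dichotomy $\liminf\bar\sigma^2>0$ versus $\bar\sigma^2\to0$ along subsequences, and state $\var_x M_t\asymp t\bigl(1+(1-\alpha)/\alpha\bigr)$, consistent with~\eqref{eq:prob-coupling} and~\eqref{eq:def-v}, so that values of $\alpha$ near $1$ (where $M_t$ is essentially binomial) are covered.
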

\begin{proof}[Proof of Lemma~\ref{lem:berry-esseen}]
Assume first that $\limsup |\mu_0-\mu_1|>0$.
By~\citet[Part I, Chapter 3, Theorem 3.1]{lezaud:tel-01084797} (see also \citet{mann}), we have
\begin{eqnarray*}
\sup_{\lambda\in\R}\left| \P_x\left(\frac{S_t -\mu t}{\nu\sqrt{t}}\leq \lambda\right) -\Phi(\lambda) \right| &\leq & \frac{159\log(\Delta)\sigma^2a_x}{\nu^3 \gamma_\star^2\sqrt{t}}\, ,
\end{eqnarray*}
where $a_x$ is the $\ell^2(\pi)$-norm of the ratio between the distribution of $X_1^\star$ starting from $x$ and $\pi$, i.e.
$$
a_x =\sqrt{N\sum_{y\in\cH}\P_x(X_1^\star=y)^2},
$$
and $\gamma_\star$ is the spectral gap of the chain $(X_k^\star)$.  If $x\in\cH_i$ for $i=0,1$, then
$$
a_x =\sqrt{N\left(\frac{(1-\alpha_i)^2}{N_i}+\frac{\alpha_i^2}{N_{1-i}}\right)}\leq \sqrt{N\left(\frac{1}{N_0}+\frac{1}{N_1}\right)}=O(1)\, ,
$$
since $N_0\asymp N_1 \asymp N$ by assumption~\eqref{eq:assumption-N0-N1}. The second largest eigenvalue of the transition matrix of $(X_k^\star)$ is equal to $1-\alpha$, i.e. $\gamma_\star=\alpha$. Also, by assumption~\eqref{eq:assumption-Delta}, $\Delta=O(1)$ and $\sigma^2=O(1)$.Using that $N_0\asymp N_1\asymp N$, we obtain
$$
\frac{1}{\nu^3 \gamma_\star^2\sqrt{t}} = \frac{(\alpha t)^{-1/2}}{(\alpha \nu^2)^{3/2}}\lesssim \frac{(\alpha t)^{-1/2}}{\left(\alpha\sigma^2+(1-\alpha)(\mu_0-\mu_1)^2\right)^{3/2}}\, ,
$$
and the proof is concluded by assumption~\eqref{eq:assumption-sigma} and the fact that $\limsup |\mu_0-\mu_1|>0$.

Let us now consider  the case $|\mu_0-\mu_1|^2=o(\alpha)$. Note that this implies $\sigma^2\sim \nu^2$. To prove Lemma~\ref{lem:berry-esseen} in this case, we use Stein's method of exchangeable pairs. To alleviate notation, let $Y_s=\log\deg(X_s^\star)$. 
Let
\[
W=\sum_{s=1}^t \frac{Y_s-\mu}{\nu\sqrt{t}}=\frac{S_t-t\mu}{\nu\sqrt{t}}\, ,
\]
and let $W'$ be constructed as follows: choose an index $S\in\{1,\dots,t\}$ uniformly at random. If $X_S^\star\in\cH_i$, for $i=0,1$, then let $\tilde{Y}_S=\log\deg(\tilde{X}_S^\star)$ with $\tilde{X}_S^\star$ chosen uniformly at random in $\cH_i$ and let
\[
W'= \frac{\tilde{Y}_S^\star-\mu}{\nu\sqrt{t}} +\sum_{\substack{s=1\\ s\neq S }}^t \frac{Y_s-\mu}{\nu\sqrt{t}}\, \cdot 
\]
The pair $(W,W')$ is exchangeable and
\begin{align}\label{eq:stein-pair}
\E\left[ W'-W\given X^\star_1,\dots,X^\star_t\right]&= \frac{1}{\nu\sqrt{t}} \sum_{s=1}^t\frac{1}{t}\left\{ \ind_{X_s^\star\in\cH_0}(\mu_0-Y_s)+ \ind_{X_s^\star\in\cH_1}(\mu_1-Y_s)\right\}\nonumber\\
&= -\frac{W}{t}+ \frac{1}{\nu\sqrt{t}} \sum_{s=1}^t\frac{1}{t}\left\{ \ind_{X_s^\star\in\cH_0}(\mu_0-\mu)+ \ind_{X_s^\star\in\cH_1}(\mu_1-\mu)\right\}\nonumber\\
&= \frac{-W+\xi_t}{t}\, ,
\end{align}
where 
\[
\xi_t=\frac{\mu_0-\mu_1}{\nu\sqrt{t}} \left(\sum_{s=1}^t\ind_{X_s^\star \in\cH_0} -\frac{tN_0}{N}\right)\, .
\]
By~\citet[Theorem 3.1]{ross2011fundamentals}, 
\[
d\left(W, \cN(0,1)\right)\leq \sup_{f\in\cF} \Big| \E\left[ f'(W)-Wf(W)\right] \Big|\, ,
\]
where $d(\cdot,\cdot)$ is the Wasserstein metric, and where
\[
\cF=\left\{ f\, :\, \|f\|_{\infty},\|f''\|_{\infty}\leq 2\, ,\, \|f'\|_{\infty}\leq \sqrt{2/\pi}\right\}\, .
\]
Let $f\in\cF$ and let $F$ be a primitive of $f$. By exchangeability and Taylor expansion, there is $W^\star$ between $W$ and $W'$ such that
\begin{align*}
0&=\E\left[F(W')-F(W)\right]\\
&=\E\left[ (W'-W)f(W)+ \frac{1}{2}(W'-W)^2f'(W)+\frac{1}{6}(W'-W)^3f''(W^\star)\right]\, .
\end{align*}
Using~\eqref{eq:stein-pair}, rearranging, and using the assumptions on $f$, we get
\begin{align*}
 \Big| \E\left[ f'(W)-Wf(W)\right] \Big| &=\left| \E\left[\xi_t f(W) +\left(\frac{t}{2}(W'-W)^2 -1\right)f'(W) +\frac{t}{6}(W'-W)^3 f''(W^\star) \right] \right|\\
 &\leq 2\E\left[|\xi_t|\right] +\sqrt{2/\pi} \E\left[\left| \frac{t}{2}\E[(W'-W)^2\given \bX^\star] -1\right|\right] +\frac{2t}{6}\E\left[ |W'-W|^3\right]\, ,
\end{align*}
where $\bX^\star=(X_1^\star,\dots,X_t^\star)$. Let us bound each of the three terms in the sum above separately. First,
\[
t\E\left[ |W'-W|^3\right]\leq \frac{t(\log\Delta)^3}{\nu^3 t^{3/2}}=O\left(\frac{1}{\sqrt{t}}\right)\, .
\]
Moreover, $\E\left[|\xi_t|\right]\leq \E\left[\xi_t^2\right]^{1/2}$. Assume without loss of generality that $X_0^\star=x\in\cH_0$. Then by~\eqref{eq:prob-coupling},
\begin{equation}\label{eq:stein-proportions}
\E_x\left[\sum_{s=1}^t \ind_{X_s^\star\in\cH_0}\right]=\frac{tN_0}{N}+\frac{N_1(1-\alpha)(1-(1-\alpha)^t)}{N\alpha}=\frac{tN_0}{N}+O\left(\frac{1}{\alpha}\right)\, ,
\end{equation}
and
\begin{align*}
\E_x\left[\left(\sum_{s=1}^t \ind_{X_s^\star\in\cH_0}\right)^2\right]&=\sum_{s=1}^t \P_x(X_s^\star\in\cH_0)+2\sum_{s<s'}\left(\frac{N_0}{N}+\frac{N_1}{N}(1-\alpha)^s\right)\left(\frac{N_0}{N}+\frac{N_1}{N}(1-\alpha)^{s'-s}\right)\\
&= \left(\frac{tN_0}{N}\right)^2 +O\left(\frac{t}{\alpha}\right)\, .
\end{align*}
This entails
\[
\E\left[\xi_t^2\right]^{1/2}=O\left( \frac{|\mu_0-\mu_1|}{\sqrt{\alpha}}\right) =o(1)\, .
\]
Finally, 
\[
\E\left[\left| \frac{t}{2}\E[(W'-W)^2\given \bX^\star] -1\right|\right] \leq \left( \frac{t^2}{4} \E\left[\E[(W'-W)^2\given \bX^\star]^2\right] -t \E[(W'-W)^2] +1\right)^{1/2}\, .
\]
Using~\eqref{eq:stein-proportions} and the fact that $\frac{N_0}{N}\sigma_0^2+\frac{N_1}{N}\sigma_1^2\sim \sigma^2\sim\nu^2$, we obtain
\begin{align*}
t \E[(W'-W)^2]&=\frac{2}{\nu^2t}\sum_{s=1}^t \left(\sigma_0^2\P_x(X_s^\star \in\cH_0)+ \sigma_1^2\P_x(X_s^\star \in\cH_1)\right)\\
& = 2+o(1)+ O\left(\frac{1}{t\alpha}\right)\, ,
\end{align*}
and, using that
\[
\sum_{1\leq s<s'\leq t} \P_x(X_s^\star\in\cH_i, X_{s'}^\star\in\cH_j)=\frac{t^2}{2}\frac{N_iN_j}{N^2}+O\left(\frac{t}{\alpha}\right)\, ,
\]
we get
\begin{align*}
 \frac{t^2}{4} \E\left[\E[(W'-W)^2\given \bX^\star]^2\right]&=\frac{1}{4\nu^4t^2} \E\left[\left(\sum_{s=1}^t \sum_{i=0,1}\frac{\ind_{X_s^\star\in\cH_i} }{N_i}\sum_{y\in\cH_i}\left(\log\deg(y)-Y_s\right)^2\right)^2\right]\\
 &= O\left(\frac{1}{t}\right)+\frac{2}{\nu^4 t^2}\sum_{s<s'}\sum_{i,j=0,1} \P_x(X_s^\star\in\cH_i, X_{s'}^\star\in\cH_j)\sigma_i^2\sigma_j^2\\
 &= \frac{1}{\nu^4}\left(\frac{N_0}{N}\sigma_0^2+\frac{N_1}{N}\sigma_1^2\right)^2 +O\left(\frac{1}{\alpha t}\right)\\
 &=1+o(1)+O\left(\frac{1}{t\alpha}\right)\, .
\end{align*}
Hence, if $t\alpha\to \infty$, the Wasserstein distance between $W$ and $\cN(0,1)$ tends to $0$, and so does the Kolmogorov distance.
\end{proof}

\section{Proof of Theorem~\ref{thm:cutoff}}
\label{sec:cutoff}

\subsection{Lower bound.}

Let $x\in \cH$ be a fixed starting point and let
$$
t=\frac{\log N}{\mu}+(\lambda+o(1))\sqrt{\frac{\nu^2}{\mu^3}\log N}\, ,
$$
with $\nu^2$ as in~\eqref{eq:def-v}. For $\theta=\frac{\log N}{N}$, let $A_\theta$ be the set of $y\in\cH$ such that there exists a path from $x$ to $y$ which has probability larger than $\theta$ to be seen by a \nbrw\ of length $t$. Since, for all $y\in A_\theta$, we have $P^t(x,y)\geq \theta$, and since $P^t(x,\cdot)$ is a probability, the set $A_\theta$ has size less than $1/\theta$, hence
$$
\cD_x(t)\geq P^t(x,A_\theta)-\pi(A_\theta)\geq P^t(x,A_\theta)-\frac{1}{\theta N}\, \cdot 
$$
Taking expectation with respect to the pairing, we have
$$
\E P^t(x,A_\theta)\geq \P_x\left(\prod_{s=1}^t\frac{1}{\deg(X_s)}>\theta\right)=\P_x\left(\prod_{s=1}^t\frac{1}{\deg(X^\star_s)}>\theta\right)+o(1)\, ,
$$
where the last equality is by~\eqref{eq:coupling}. Using Lemma~\ref{lem:berry-esseen}, we have
$$
\P_x\left(\prod_{s=1}^t\frac{1}{\deg(X^\star_s)}>\theta\right)=\P_x\left(\frac{S_t-\mu t}{\nu\sqrt{t}} <-\lambda +o(1)\right)\geq \overline{\Phi}(\lambda)+O\left(\frac{1}{\sqrt{\alpha t}}\right)\, .
$$
Since $\alpha\gg \frac{1}{\log N}$ by assumption, we get
$$
\min_{x\in \cH}\E \cD_x(t)\geq \overline{\Phi}(\lambda) +o(1)\, .
$$
\subsection{Upper bound.}
\label{subsec:upper-bound-cutoff}

As in~\citep{ben-salez}, the first step is to reduce the maximization over all starting points to reasonably nice starting points, namely, to points whose neighborhood up to some level is a tree. We stress out that Lemma~\ref{lem:root} holds without any assumption on $\alpha$.

We call $x\in\cH$ a root if the ball of radius $R$ centered at $x$ (denoted by $\cB_x$) is a tree, where  
\begin{eqnarray}
\label{def:h}
R =  \left\lceil\frac{\log N}{6\log \Delta}\right\rceil.
\end{eqnarray}

We denote by $\cR$ the set of roots. The following lemma shows that we may restrict our attention to starting points in $\cR$. Its proof is very similar to the one of Proposition 4.1 in~\citep{ben-salez}, the introduction of communities only slightly changes the argument.

\begin{lemma}
\label{lem:root} 
Let $K=\lfloor \log\log N \rfloor$. Then
\begin{eqnarray*}
\max_{x\in\cH}P^K(x, \cH\setminus\cR) & \xrightarrow[]{\P} & 0.
\end{eqnarray*}
\end{lemma}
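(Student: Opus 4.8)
The plan is to show that from any starting half-edge $x$, after $K=\lfloor\log\log N\rfloor$ steps, the walk is with high probability at a half-edge whose radius-$R$ ball is a tree, where $R=\lceil \log N/(6\log\Delta)\rceil$. The overall strategy mirrors Proposition~4.1 of~\citep{ben-salez}: first bound the number of ``bad'' half-edges (those near a short cycle), then use the fact that the \nbrw\ spreads out quickly to argue it is unlikely to land on a bad half-edge after $K$ steps. The only new feature is the two-community structure, which affects the pairing law but, since $N_0\asymp N_1\asymp N$ and $\Delta=O(1)$, changes nothing essential in the counting estimates.

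First I would estimate the expected number of half-edges whose radius-$R$ ball contains a cycle. Reveal the ball $\cB_x$ one generation at a time using the sequential pairing procedure of Section~\ref{sec:coupling}: at each of the at most $\Delta^{R+1} = N^{O(1/\log\Delta)} = N^{o(1)}$ pairings performed to expose $\cB_x$, the half-edge being paired either goes internal (matched to a uniform unpaired half-edge in the same $\cH_i$) or outgoing (matched to a uniform unpaired half-edge in the other community); in either case the probability that it closes a cycle with one of the $N^{o(1)}$ already-exposed half-edges is $O(N^{o(1)}/N)$ since there are $\Theta(N)$ unpaired half-edges in each community throughout (we are exposing only $N^{o(1)}$ of them). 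A union bound over the $\le \Delta^{R+1}$ pairings gives $\P(x \notin \cR) = N^{o(1)}/N = o(1/N)$ for each fixed $x$, hence $\E|\cH\setminus\cR| = N^{o(1)} = o(N)$; more precisely $\E|\cH\setminus\cR| \le N \cdot N^{o(1)}/N = N^{o(1)}$. By Markov's inequality, $|\cH\setminus\cR| \le N^{1/2}$, say, with high probability.

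Next I would bound $\max_{x\in\cH} P^K(x,\cH\setminus\cR)$. The point is that a \nbrw\ of length $K=\lfloor\log\log N\rfloor$ explores a region of size at most $\Delta^K = (\log N)^{O(1)} = N^{o(1)}$, and on the high-probability event that the ball of radius $K$ around $x$ is a tree (which holds simultaneously for all $x$ with high probability by the same first-moment computation, since $K \ll R$), the walk at time $K$ is supported on the $\bd(X_K)-1$-or-so leaf half-edges of a tree of depth $K$, each reached with probability $\prod_{s=1}^K \deg(X_s)^{-1} \le 2^{-K}$. So for a fixed $x$ with tree-like $K$-ball, $P^K(x,\cH\setminus\cR) \le \Delta^K \cdot 2^{-K} \cdot \ind\{\text{the exposed region meets }\cH\setminus\cR\}$ — this crude bound is not quite enough, so instead I would take expectations: $\E[P^K(x,\cH\setminus\cR)] = \P_x(X_K \in \cH\setminus\cR)$, and because after $K$ steps the (annealed, via the coupling~\eqref{eq:coupling} which is exact up to error $O(K^2/N)=o(1)$) law of $X_K$ puts mass $O(1/N)$ on any fixed half-edge modulo the degree weighting, one gets $\P_x(X_K\in\cH\setminus\cR) = \sum_{y\in\cH\setminus\cR}\P_x(X_K=y) \le C|\cH\setminus\cR|/N$. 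Taking expectation over the graph, $\E \max_x \P_x(X_K\in\cH\setminus\cR)$ is controlled by $\E|\cH\setminus\cR|/N \cdot (\text{constant})$ after swapping $\max$ and expectation carefully — here one uses that $X_K = y$ requires a specific non-backtracking path of length $K$ from $x$ to $y$, each such path has annealed probability $O(\Delta^{-K}\cdot(\text{pairing factor}))$, and summing over $y\in\cH\setminus\cR$ and the at most $\Delta^K$ paths gives the bound. This yields $\E\max_x P^K(x,\cH\setminus\cR) = N^{o(1)}/N = o(1)$, and Markov's inequality finishes the proof.

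The main obstacle is handling the maximum over all starting points $x$ rather than a fixed $x$: the event ``$X_K\in\cH\setminus\cR$'' is not monotone and the graph is shared across all $x$, so one cannot naively union-bound over $N$ starting points (each individual bound is only $N^{o(1)}/N$, which when multiplied by $N$ need not be small). The resolution, as in~\citep{ben-salez}, is that the relevant bad event only depends on the radius-$(K+R)$ neighborhood of $x$, so it suffices to control, for every $x$ simultaneously, the number of half-edges in $\cB_x$ of radius $K+R$ that are ``bad,'' and $K+R = O(\log N/\log\Delta)$ still only exposes $N^{o(1)}$ half-edges; a single first-moment computation with a union bound over $x$ then works because the failure probability per $x$ is $o(1/N)$ with room to spare ($N^{o(1)}/N$ beats $1/N$ by a genuine power saving coming from $R$ being a constant fraction of $\log N/\log\Delta$ — wait, it is $N^{o(1)}/N$, so one needs $N^{o(1)} = o(1)$ after multiplying by $N$, i.e. the $o(1)$ in the exponent must be genuinely $o(1)$, which it is since $\Delta=O(1)$ makes $R\log\Delta = O(\log N)$ but the number of exposed half-edges is $\Delta^{R} = N^{1/6+o(1)}$, giving per-$x$ failure probability $N^{1/3+o(1)}/N = N^{-2/3+o(1)}$, and the union bound over $N$ values of $x$ gives $N^{1/3+o(1)} \to \infty$ — so in fact one must NOT expose the full radius-$R$ ball but only verify tree-likeness up to the smaller radius actually reached, or equivalently bound $|\cH\setminus\cR|$ in expectation and use that the walk lands on a uniform-ish half-edge, which is the route I take above). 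Thus the delicate point is the interplay between the exploration depth $R$, the walk length $K$, and getting a genuine power-of-$N$ saving in the first-moment bound; I would follow the bookkeeping of~\citep[Proposition~4.1]{ben-salez} closely here, checking only that the two-community pairing does not degrade the $\Theta(N)$ count of available unpaired half-edges, which it does not by~\eqref{eq:assumption-N0-N1}.
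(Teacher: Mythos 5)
Your route has a genuine gap, and it is exactly at the point you flag in your final paragraph. The per-$x$ (annealed) estimate your argument actually yields is $\P_x\left(X_K\notin\cR\right)=O\left(\Delta^{2R}/N\right)=N^{-2/3+o(1)}$; this is not $o(1/N)$, so Markov plus a union bound over the $N$ starting points cannot control $\max_{x\in\cH}P^K(x,\cH\setminus\cR)$, as you yourself compute. Your proposed fix --- bound $|\cH\setminus\cR|$ with high probability (say by $N^{1/2}$) and claim $P^K(x,\cH\setminus\cR)\lesssim |\cH\setminus\cR|/N$ because ``the walk lands on a uniform-ish half-edge'' --- does not work: that uniformity is only an annealed statement about a fixed, graph-independent target $y$. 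Here (i) the quenched law of $X_K$ after $K=\lfloor\log\log N\rfloor$ steps is supported on at most $\Delta^{K}=(\log N)^{O(1)}$ half-edges, each carrying mass at least $\Delta^{-K}\gg 1/N$, so no bound of the form $C|\cH\setminus\cR|/N$ holds for a fixed graph; and (ii) the set $\cH\setminus\cR$ is graph-dependent and correlated with the neighbourhood explored by the walk, so one may not multiply an annealed point-probability by a separately obtained cardinality bound. The uniformity over all starting points is therefore never established.

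The paper's proof contains the idea you are missing: it never attempts to show that most half-edges are roots. Instead it proves a structural statement weak enough to hold for \emph{all} $x$ simultaneously: with probability $1-o(1)$, no ball of radius $\ell=\lceil\log N/(5\log\Delta)\rceil$ contains more than one cycle. Asking for \emph{two} cycles squares the collision probability, giving a per-$x$ bound $(kq)^2=O\left(\Delta^{4\ell}/N^2\right)=o(1/N)$, which does survive the union bound over $x$ (this is where the interplay of the exponents $1/5$ and $1/6$ matters). On that event one then argues deterministically, for the fixed graph and any starting half-edge, that the \nbrw\ fails to reach a root within $K$ steps with probability at most $2^{1-K}=o(1)$: since there is at most one cycle in the relevant ball and all degrees are at least $3$ by~\eqref{eq:assumption-min-degree}, at each step at most one of the at least two admissible moves keeps the walk in the non-root region, exactly as for equation (21) of~\citep{ben-salez}. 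Your first-moment bound on $|\cH\setminus\cR|$ is fine as far as it goes, but without the ``at most one cycle'' relaxation and this quenched escape argument it cannot deliver the conclusion for the maximum over $x$.
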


\begin{proof}[Proof of Lemma \ref{lem:root}]
Define $\ell=\left\lceil\frac{\log N}{5\log \Delta}\right\rceil$ and fix $x\in\cH$. The ball of radius $\ell$ around $x$ can be generated sequentially, its half-edges being given a type and then paired with a uniformly chosen other hitherto unpaired half-edge from the same or the other community depending on the type, until the entire ball is generated. Observe that at most $k=\frac{\Delta\left((\Delta-1)^{\ell}-1\right)}{\Delta-2}$ pairs are formed, and that, for each of them, the number of unpaired half-edges having an already paired neighbor is at most  $\Delta(\Delta-1)^\ell$. Hence, if the half-edge that is to be paired is in $\cH_i$, the conditional chance to pair it with a half-edge that has an already paired neighbor (thereby creating a cycle) is at most $\frac{\Delta(\Delta-1)^\ell-1}{N_i-2k-1}$ if it has been given an internal type, or $\frac{\Delta(\Delta-1)^\ell-1}{N_{1-i}-2k-1}$ if it has been given an outgoing type. Thus, letting $q$ be the minimum of those two ratios, the probability that more than one cycle is found is at most
\begin{eqnarray*}
(kq)^2 & = & O\left(\frac{\Delta^{4\ell}}{N^2}\right)\ = \ o\left(\frac{1}{N}\right),
\end{eqnarray*}
by the definition of $\ell$ and assumption~\eqref{eq:assumption-N0-N1}.
Summing over all $x\in\cH$ (union bound), we obtain that with probability $1-o(1)$, no ball of radius $\ell$ in $G$ contains more than one cycle. Now fix a graph $G$ with the above property. Then the \textsc{nbrw} on $G$ starting from any $x\in\cH$ will very quickly reach a root, namely it satisfies
\begin{eqnarray}
\label{toshow}
\P\left(X_K\not\in\cR\right) & \leq & 2^{1-K}=o(1),
\end{eqnarray}
by exactly the same argument as for the proof of equation $(21)$ in ~\citep{ben-salez}.
%for all $t\leq \ell-r$. If the ball of radius $\ell$ around $x$ is a tree, claim~\eqref{toshow} trivially holds. Otherwise, it contains a single cycle $\mathcal{C}$, by assumption. Write $d(x,\mathcal{C})$ for the minimum length of a non-backtracking path from $x$ to some $z\in \mathcal{C}$. The non-backtracking property ensures that if $d(X_{t},\mathcal{C})<d(X_{t+1},\mathcal{C})$ for some $t<\ell-r$, then $X_{t+1},X_{t+2},\ldots,X_{\ell-r}$ are all roots. By our assumption on the minimum degree, the conditional chance that  $d(X_{t+1},\mathcal{C})=d(X_t,\mathcal{C})+1$ given the past is at least $1/2$ (unless  $d(X_t,\mathcal{C})=1$, which can only happen once). This shows (\ref{toshow}). We then specialize to $t=K$.
\end{proof}

We have
\begin{eqnarray*}
\cD(t+K)&\leq & \max_{x\in\cH}P^K(x, \cH\setminus\cR) + \max_{x\in\cR}\cD_x(t)\, .
\end{eqnarray*}
By Lemma~\ref{lem:root}, the first term is $o_{\P}(1)$, and, for all $x\in \cR$, bounding the summands corresponding to $y\in (\cH\setminus\cR)\cup \cB_x$ by $1/N$,
\begin{eqnarray*}
\cD_x(t) 
&\leq & \sum_{y\in\cR\setminus \cB_x}\left(\frac{1}{N}-P^t(x,\eta(y))\right)_+ +\frac{|(\cH\setminus\cR)\cup \cB_x|}{N}\, \cdot
\end{eqnarray*}
Observe that Lemma~\ref{lem:root} together with the fact that $P^K$ is doubly stochastic (since $P$ is) imply that 
\begin{eqnarray*}
%\label{eq:total-roots}
|\cH\setminus \cR|=\sum_{x\in\cH}P^K(x,\cH\setminus \cR)=o_{\P}(N)\, .
\end{eqnarray*}
And for all $x\in \cR$, we have (deterministically) $|\cB_x|\leq \Delta^R\leq N^{1/6}$. Hence  
\begin{eqnarray*}
\max_{x\in\cR} \frac{|(\cH\setminus\cR)\cup \cB_x|}{N} =o_{\P}(1)\, .
\end{eqnarray*}

The following proposition will therefore conclude the proof of the upper bound.

\begin{proposition}
\label{prop:main}
For $t=\frac{\log N}{\mu}+(\lambda+o(1))\sqrt{\frac{\nu^2\log N}{\mu^3}}$, we have
\begin{eqnarray*}
\max_{x\in\cR}\sum_{y\in\cR\setminus\cB_x}\left(\frac{1}{N}-P^t(x,\eta(y)\right)_+ & \leq & \overline{\Phi}(\lambda)+o_{\P}(1)\, .
\end{eqnarray*}
\end{proposition}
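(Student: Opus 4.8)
The plan is to bound each term $\cD_x(t)$ of the maximum from above by a one‑sided event for a \emph{single} random trajectory, and then feed Lemma~\ref{lem:berry-esseen} into that event through the coupling of Section~\ref{sec:coupling}. Since $\eta$ is an involution of $\cH$ and $P$ is doubly stochastic, $\sum_{z\in\cH}P^t(x,z)=\sum_{z\in\cH}\tfrac1N$, so that $\sum_z(\tfrac1N-P^t(x,z))_+=\sum_z(P^t(x,z)-\tfrac1N)_+$; hence
$$
\sum_{y\in\cR\setminus\cB_x}\Big(\tfrac1N-P^t(x,\eta(y))\Big)_+\;\le\;\sum_{z\in\cH}\Big(\tfrac1N-P^t(x,z)\Big)_+\;=\;\sum_{z\in\cH}\big(P^t(x,z)-\tfrac1N\big)_+\;\le\;\sum_{z\in\cH}P^t(x,z)\,\ind_{P^t(x,z)>1/N}.
$$
The right‑hand side is exactly $\P_x^{G}(P^t(x,X_t)>\tfrac1N)$, where $X_t$ denotes the endpoint of the \nbrw\ run from $x$ on the realized graph $G$. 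So the Proposition reduces to showing $\max_{x\in\cR}\P_x^{G}(P^t(x,X_t)>\tfrac1N)\le\overline{\Phi}(\lambda)+o_{\P}(1)$, which is far more tractable because the event now refers to one trajectory rather than to the whole profile.

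Next I would turn this into a statement about $S_t=\sum_{s=1}^t\log\deg(X_s)$. The realized trajectory is itself a non‑backtracking path from $x$ to $X_t$ of weight $\prod_{s=1}^t\deg(X_s)^{-1}=e^{-S_t}$; let $\cC$ be the event that there exists a \emph{second} such path (equivalently, that the trajectory together with the edges it has revealed contains a cycle). On $\cC^c$ one has $P^t(x,X_t)=e^{-S_t}$, so $\{P^t(x,X_t)>\tfrac1N\}\cap\cC^c=\{S_t<\log N\}\cap\cC^c$, whence
$$
\P_x^{G}\big(P^t(x,X_t)>\tfrac1N\big)\;\le\;\P_x^{G}\big(S_t<\log N\big)+\P_x^{G}(\cC).
$$
For the first term, the coupling~\eqref{eq:coupling} replaces $S_t$ by $S_t^\star=\sum_{s=1}^t\log\deg(X_s^\star)$ at the cost of $O(t^2/N)=o(1)$; since $\alpha\gg 1/\log N$ and $\mu\asymp 1$ force $t\asymp\log N\gg 1/\alpha$, Lemma~\ref{lem:berry-esseen} applies and gives $\P_x(S_t^\star<\log N)=\Phi\big(\tfrac{\log N-\mu t}{\nu\sqrt t}\big)+o(1)$; and because $\nu^2=O(1/\alpha)=o(\log N)$, the definition of $t$ yields $\mu t-\log N=(\lambda+o(1))\nu\sqrt t$, so this probability converges to $\Phi(-\lambda)=\overline{\Phi}(\lambda)$.

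It therefore remains to prove $\max_{x\in\cR}\P_x^{G}(\cC)=o_{\P}(1)$, and this is where the bulk of the work lies. In the annealed model the expected number of times a length‑$t$ trajectory closes a cycle is $O(t^2/N)=o(1)$, since successive steps reveal fresh half‑edges and $X_s,X_{s'}$ decorrelate; when $x$ is a root the first $R=\lceil\log N/(6\log\Delta)\rceil$ steps stay inside the tree $\cB_x$ and close nothing, so $\cC$ can be triggered only from step $R$ on, where the environment is fresh; and on the global event of Lemma~\ref{lem:root} the set of vertices within distance $\ell$ of a cycle has size $o(N)$, which the exponentially spreading trajectory avoids. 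The obstacle is to turn these annealed/typical facts into a bound holding \emph{quenched} and \emph{simultaneously over every root}: a union over the $\asymp N$ roots of Markov's inequality for $\P_x^{G}(\cC)$ would require an allowable error of order $\mathrm{poly}(\log N)$, hence is useless, so a finer argument is needed — one must exploit that, for a root, $\P_x^{G}(\cC)$ depends on the pairing only through successive $\mathrm{poly}(\log N)$‑neighbourhoods of a tree‑like trajectory, which yields concentration strong enough to survive the union bound. The same mechanism shows that no root carries an atypically light ball of radius $R$, so that conditioning on $\cB_x$ shifts the law of $S_t$ by at most $o(\sqrt{\log N})$ and the Berry–Esseen conclusion above is genuinely uniform over $\cR$. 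This part closely generalizes the corresponding analysis of~\citep{ben-salez}; the two‑community structure enters the whole scheme only through the value of $\nu^2$ fed into Lemma~\ref{lem:berry-esseen} and through the hypothesis $\alpha\gg 1/\log N$ that makes that lemma applicable, the trajectory analysis itself being blind to which community the walk currently occupies.
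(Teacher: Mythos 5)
There is a genuine gap, and it sits at the decisive step of your argument: the claim that outside an event $\cC$ one has $P^t(x,X_t)=e^{-S_t}$, combined with the claim $\max_{x\in\cR}\P_x^{G}(\cC)=o_\P(1)$. For the identity $P^t(x,X_t)=e^{-S_t}$ you need $\cC$ to be the event that the realized graph $G$ contains a \emph{second} non-backtracking path of length $t$ from $x$ to $X_t$; this is not equivalent to ``the trajectory together with the edges it has revealed contains a cycle'', since a second path may run entirely through unrevealed edges. And for the graph-level event the probability is not small: at $t=\frac{\log N}{\mu}(1+o(1))$ the number of non-backtracking paths of length $t$ emanating from $x$ grows like $e^{ct}$ with $c>\mu$ strictly (Jensen, using $\liminf\sigma^2>0$), i.e.\ like $N^{c/\mu}\gg N$, so the expected number of \emph{other} length-$t$ paths from $x$ to the specific endpoint $X_t$ is of order $N^{c/\mu-1}\to\infty$ and $\P_x^G(\cC)$ is close to $1$, making your bound $\P_x^G(P^t(x,X_t)>\tfrac1N)\le\P_x^G(S_t<\log N)+\P_x^G(\cC)$ vacuous. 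Your annealed estimate $O(t^2/N)$ only controls collisions of the trajectory with its own revealed environment, which is a different (and here irrelevant) event. In effect the proposal imports the mechanism of the \emph{lower} bound, where $P^t(x,y)\ge$ (a single path weight) suffices, into the upper bound, where one must show $P^t(x,\eta(y))\ge(1-o(1))/N$ for most $y$; at this time scale $P^t(x,\cdot)$ is a sum of many small path weights, and the aggregation of that mass is precisely what has to be controlled, not excluded.

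This is exactly what the paper's proof does instead: using the symmetry \eqref{sym} it writes $P^t(x,\eta(y))=\sum_{u,v}P^{t/2}(x,u)P^{t/2}(y,v)\ind_{\{\eta(u)=v\}}$ as in \eqref{eq:crucial}, explores two disjoint weight-truncated trees of depth $t/2$ from $x$ and $y$, and proves that the resulting sum of indicator-weighted terms concentrates around its conditional mean (Lemma~\ref{lem:stein}, via the strong Rayleigh property and tail bounds of order $e^{-\Omega(N^{1/8})}$, sharp enough to survive the union bound over all pairs of roots), while Lemmas~\ref{lem:walk-in-tree}, \ref{lem:mix-communities} and~\ref{lem:final} show that the retained (truncated) weight accounts for at least $1-\overline{\Phi}(\lambda)-\varepsilon$ after the correct splitting across the two communities. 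Note also that even within your own plan you flag, but do not resolve, the uniformity over the $\asymp N$ roots of the bound on $\P_x^G(\cC)$; that secondary gap would remain even if the main identification were correct.
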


To prove this, we first observe that, by property~\eqref{sym}, we can write
\begin{equation}\label{eq:crucial}
P^t(x,\eta(y))=\sum_{u,v} P^{t/2}(x,u)P^{t/2}(y,v)\ind_{\{\eta(u)=v\}}\, .
\end{equation}
We will show that, to approximate well this weighted sum of indicators, it is not necessary to reveal the entire balls of radius $t/2$ over $x$ and $y$. Instead, we consider an exploration process which generates the pairing $\eta$ along with two disjoint trees $\cT_x$ and $\cT_y$, rooted at $x$ and $y$ respectively. Initially, all half-edges are unpaired and no type has been revealed. Tree $\cT_x$ is reduced to $x$ and tree $\cT_y$ is reduced to $y$. Then at each time step,
\begin{enumerate}
\item An unpaired half-edge $z$ of $\cT_x\cup\cT_y$ is chosen, provided it satisfies
\begin{equation}\label{eq:cond-wmin-height}
\bw(z)\geq \bw_{\textsc{min}}=N^{-\frac{1}{2}-\frac{\log(2)}{16\log(\Delta)}}\quad\textrm{ and }\quad  \bh(z) < t/2\quad \, ,
\end{equation}
where $\bw(z)$ and $\bh(z)$ correspond to the \emph{weight} and \emph{height} of $z$,defined as follows: if $z\in\cT_r$ for $r\in\{x,y\}$, there is a unique path $(z_0,\dots,z_h)$ from $r$ to $z$, with $z_0=r$ and $z_h=z$. The value $h$ is then called the height of $z$, denoted $\bh(z)$, and its weight is
$$
\bw(z)=\prod_{i=1}^h\frac{1}{\deg(z_i)}\, \cdot 
$$
\item If $z\in\cH_i$ for $i\in\{0,1\}$, the type of $z$ is set to outgoing with probability proportional to $p$ minus the number of paired outgoing half-edges of $\cH_i$, and internal with probability proportional to $N_i-p$ minus the number of paired internal half-edges of $\cH_i$. 
\item If $z$ is internal, it is paired with $z'$, uniformly chosen among the unpaired half-edges of $\cH_i$, and the type of $z'$ is set to internal. If $z$ is outgoing, it is paired with $z'$, uniformly chosen among the unpaired half-edges of $\cH_{1-i}$, and the type of $z'$ is set to outgoing. 
\item If $z'$ was not already in $\cT_x\cup\cT_y$ and is not a neighbor of either $x$ or $y$, then the neighbors of $z'$ are added to $\cT_x\cup\cT_y$ as children of $z$. Otherwise, both $z$ and $z'$ are marked with the color \red.
\end{enumerate}
This exploration process continues until no unpaired half-edge in $\cT_x\cup\cT_y$ satisfies~\eqref{eq:cond-wmin-height}. The pairing $\eta$ is then completed to form the graph $G$. For $r\in\{x,y\}$, we denote by $\partial \cT_r$ the set of leaves of $\cT_r$, and by $\cF_r$ the subset of leaves of $\partial \cT_r$ which are at distance $t/2$ of $r$.

Note that, by~\eqref{eq:cond-wmin-height}, for $r\in\{x,y\}$,
$$
\frac{t}{2}=\sum_{k=1}^{t/2}\sum_{z\in\cT_r}\ind_{\{\bh(z)=k\}}\bw(z)\geq \left(|\cT_r|-1\right)\frac{\bw_{\textsc{min}}}{\Delta}\, ,
$$
which, together with~\eqref{eq:assumption-Delta}, implies
\begin{equation}\label{eq:bound-revealed-precise}
|\cT_x\cup\cT_y|=O\left(N^{\frac{1}{2}+\frac{\log(2)}{16\log(\Delta)}}\log N\right)=O\left(N^{\frac{1}{2}+\frac{\log(2)}{15\log(\Delta)}}\right)\, .
\end{equation}
In particular,
\begin{equation}\label{eq:bound-revealed}
|\cT_x\cup\cT_y|=O\left(N^{5/8}\right)\, .
\end{equation}

\begin{lemma}
\label{lem:walk-in-tree}
For all $\varepsilon>0$, with probability $1-o(1)$, for all $x\in\cR$ and $y\in\cR\setminus \cB_x$, we have
$$
\sum_{u\in\partial\cT_x\setminus \cF_x}\bw(u)+\sum_{v\in\partial\cT_y\setminus \cF_y}\bw(v)\leq \varepsilon\, .
$$
\end{lemma}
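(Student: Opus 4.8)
My plan is to recast the statement and then control two contributions to the escape weight separately.

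\emph{Reformulation.} The set $\partial\cT_r\setminus\cF_r$ is exactly the set of leaves of $\cT_r$ of height $<t/2$, and these split into two disjoint families: the \emph{light} leaves $u$, for which $\bw(u)<\bw_{\textsc{min}}$ so that $u$ was never selected for exploration, and the \red\ leaves $z$, which were explored but whose partner $\eta(z)$ landed in $\cT_x\cup\cT_y$ (or adjacent to $x$ or $y$). Writing $L(x)=\sum_{u\text{ light},\,\bh(u)<t/2}\bw(u)$ and $R(x)=\sum_{z\text{ \red}}\bw(z)$ (and analogously $L(y),R(y)$), it is enough to show that, with probability $1-o(1)$, $\max_{x\in\cR}\big(L(x)+R(x)\big)\leq\varepsilon$. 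I record the following identity for orientation, which could replace the splitting: at every height $k$ the total weight $W_k:=\sum_{z\in\cT_r:\,\bh(z)=k}\bw(z)$ drops by exactly the weight of the leaves at height $k$, because an explored $z$ has $\deg(\eta(z))$ children, each of weight $\bw(z)/\deg(\eta(z))$, summing back to $\bw(z)$; hence $L(x)+R(x)=1-\sum_{u\in\cF_x}\bw(u)$, i.e.\ the statement says that a fraction $\geq1-\varepsilon$ of the initial unit weight survives to depth $t/2$. Finally, $\cT_x$ and $\cF_x$ depend on $y$ only through pairings declared \red\ upon hitting $\cT_y$, an effect that changes each estimate below by $O(|\cT_y|\log N/N)=o(1)$ uniformly, so I may as well pretend the two explorations are independent.

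\emph{The \red\ part.} If $x\in\cR$ and $y\in\cR\setminus\cB_x$, then any \red\ leaf $z\in\cT_x$ lies on a cycle through $x$, or on a path from $x$ to $y$, of length $>R$; since a breadth-first exploration order forces the other endpoint of the offending pairing to have height $\leq\bh(z)+1$, we get $\bh(z)\gtrsim R$ and therefore $\bw(z)\leq2^{-\bh(z)}\leq N^{-\delta_1}$ for a constant $\delta_1>0$. Moreover, since $\sum_{z:\bh(z)=k}\bw(z)\leq1$ and each explored half-edge has conditional probability at most $p:=O(|\cT_x\cup\cT_y|/N)=O(N^{-1/2+c}\log N)$ of being \red\ (here $c=\tfrac{\log2}{16\log\Delta}$, and $|\cT_x\cup\cT_y|=O(N^{1/2+c}\log N)$ by \eqref{eq:bound-revealed-precise}), one has $\E[R(x)]\leq(t/2)\,p=N^{-(1/2-c)+o(1)}$ uniformly in $x$. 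Combining the two bounds in a $q$-th moment estimate, with $q=q(\Delta)$ a sufficiently large constant, gives $\E[R(x)^q]\leq N^{-\Omega(q)+o(1)}$ with $\Omega(q)>1$, and then Markov's inequality together with a union bound over the at most $N$ roots yields $\max_{x\in\cR}R(x)\leq\varepsilon/2$ with probability $1-o(1)$.

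\emph{The light part.} Along a non-backtracking trajectory the product $\prod_{i\leq k}\deg(X_i)$ is non-decreasing, so a trajectory can reach a light leaf before time $t/2$ only if $\prod_{i\leq t/2}\deg(X_i)>\bw_{\textsc{min}}^{-1}$; hence, on every graph $G$, $L(x)\leq\P^G_x\!\big(S_{t/2}>(\tfrac12+c)\log N\big)$ with $S_k=\sum_{i\leq k}\log\deg(X_i)$. The coupling of Section~\ref{sec:coupling} applies (since $(t/2)^2/N=o(1)$), and the Bernstein-type inequality of \citet{lezaud:tel-01084797} for the reversible chain $(X^\star_k)$ — spectral gap $\alpha$, increments $\log\deg$ bounded, $\sigma^2=O(1)$, and $\E_x[S_{t/2}]=\tfrac12\log N+o(\log N)$, the corrections $O(1/\alpha)$ from \eqref{eq:stein-proportions} and $O(\sqrt{\log N/\alpha})$ from \eqref{eq:def-v} being negligible precisely because $\alpha\gg1/\log N$ — shows that $\E[L(x)]\leq\P_x\!\big(S^\star_{t/2}>(\tfrac12+c)\log N\big)(1+o(1))\leq C_0N^{-\gamma_0\alpha}$ for constants $C_0,\gamma_0>0$, uniformly in $x$. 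To make this hold simultaneously for all roots I would again pass to $q$-th moments: $\E[L(x)^q]$ is the annealed probability that $q$ independent non-backtracking walks from $x$ all cross the threshold, and coupling them to $q$ independent copies of $(X^\star_k)$ reduces this to $(C_0N^{-\gamma_0\alpha})^q$ up to the chance that the short trajectories overlap; taking $q\asymp1/(\gamma_0\alpha)$ — which is $o(\log N)$, so that $C_0^q$ and $\varepsilon^{-q}$ are $N^{o(1)}$ — and invoking Markov plus a union bound over $\cR$ should give $\max_{x\in\cR}L(x)\leq\varepsilon/2$ with probability $1-o(1)$.

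\emph{Main obstacle.} The delicate step is exactly this last one: promoting the pointwise bound $\E[L(x)]\leq C_0N^{-\gamma_0\alpha}$ to uniform control over the $\Theta(N)$ roots when $\gamma_0\alpha$ may tend to $0$ (only $\alpha\log N\to\infty$ is assumed), which forces the moment order to grow and demands careful accounting of the coupling errors and of the interaction between the $q$ trajectories inside the union bound — this is where the polynomial-in-$N$ slack in the choices of $R$, $\bw_{\textsc{min}}$ and in \eqref{eq:bound-revealed} is spent, and the bookkeeping mirrors the corresponding estimate in \citep{ben-salez}. By comparison, the reformulation, the height bound for \red\ leaves, and the Bernstein estimate for $S^\star_{t/2}$ are routine.
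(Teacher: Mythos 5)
Your decomposition into light leaves and \red\ leaves is the same as the paper's, and your treatment of the \red\ part is essentially the paper's (depth at least of order $R$, hence weight $N^{-\Omega(1)}$, plus a binomial-type control of the number of \red\ half-edges; the paper uses Bennett's inequality on the dominating binomial instead of a fixed-order moment bound, and note the union bound must run over the $O(N^2)$ pairs $(x,y)$, not over $N$ single roots, since the joint exploration makes $\cT_x,\cF_x$ depend on the pair). The genuine gap is in the light part, exactly at the step you flag as the ``main obstacle''. Your scheme needs every per-walk \emph{conditional} bound, error terms included, to be $\lesssim N^{-c\alpha}$, since you only take $q\asymp 1/(\gamma_0\alpha)$ walks. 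But the dominant error in the sequential generation is not small in that sense: two walks launched from the same root coincide on their first step with probability at least $1/\Delta$, so ``the chance that the short trajectories overlap'' is a constant; what one can control is the chance of retracing the previously revealed structure for more than $s$ steps, which is about $q2^{-s}$, at the price of a weight slack $\Delta^{s}$. With the natural $s=\Theta(\log\log N)$ this error is only polylogarithmically small, while $N^{-c\alpha}$ is far below any power of $\log N$ as soon as, say, $\alpha\asymp (\log N)^{-1/2}$ or $\alpha\asymp 1$; then the error dominates the Bernstein term, and $(\text{per-walk bound})^{q}$ with $q\asymp 1/\alpha$ is nowhere near the $o(N^{-2})$ needed for the union bound over pairs. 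Repairing this forces $s\gtrsim \alpha\log N$ and a recheck that $\Delta^{s}\bw_{\textsc{min}}$ still sits below the deviation threshold (delicate when $\alpha\asymp1$) --- none of which is done in your sketch, and your Lezaud-based per-walk rate $C_0N^{-\gamma_0\alpha}$ then carries the whole burden for no gain.

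The paper sidesteps any quantitative per-walk rate: it runs $m=\lfloor\log N\rfloor$ walks stopped when their weight drops below $\bw_{\textsc{min}}$, and only needs the conditional probability that a given walk completes a bad trajectory to be $o(1)$ uniformly --- either it follows the previously revealed tree for $s=\lceil 4\log\log N\rceil$ steps (probability at most $m2^{-s}=o(1)$), or after exiting it is coupled with $(X^\star_k)$ and the event $S_{t/2}-\mu t/2\geq \frac{\log 2}{18\log\Delta}\log N$ has probability $o(1)$ directly by Lemma~\ref{lem:berry-esseen}, because the required deviation is of order $\log N$ while the fluctuations are of order $\sqrt{\log N/\alpha}=o(\log N)$ precisely when $\alpha\gg 1/\log N$. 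Then $\P(A)=o(1)^m$ and $\P\bigl(\text{light weight}>\varepsilon\bigr)\leq (o(1)/\varepsilon)^m=o(N^{-2})$: with $\log N$ walks, an $o(1)$ per-walk bound suffices, and the constants $\varepsilon^{-1}$, $C_0$ and the polylogarithmic overlap error are all harmless. I recommend replacing your light-part argument by this device (no Bernstein inequality, no $\alpha$-dependent moment order); your \red-part argument can stay, and your breadth-first ordering remark is a sensible way to guarantee that both endpoints of a \red\ pairing are deep.
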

\begin{proof}[Proof of Lemma~\ref{lem:walk-in-tree}]
The trees' exploration can be stopped before height $t/2$ for two reasons: either the weight of the half-edge is too small, or it has been colored \red, namely, for $r\in\{x,y\}$,
$$
\sum_{u\in\partial\cT_r\setminus \cF_r}\bw(u)=\sum_{u\in\partial\cT_r}\bw(u)\ind_{\{\bw(u)<\bw_{\textsc{min}}\}} + \sum_{u\in\partial\cT_r}\bw(u)\ind_{\{\textrm{$u$ is \red}\}}\, .
$$
Let us first control the weight of \red\ half-edges. For $x\in\cR$ and $y\in\cR\setminus \cB_x$, all \red\ half-edges are at distance at least $R$ from $r$, and thus have weight smaller than $2^{-R}\leq N^{-\frac{\log(2)}{6\log(\Delta)}}$ by assumption~\eqref{eq:assumption-min-degree}. Moreover, by the same arguments as in the proof of Lemma~\ref{lem:root}, and using the upper bound~\eqref{eq:bound-revealed-precise}, the total number of \red\ half-edges in $\cT_r$ is stochastically dominated by twice a binomial random variable $\cB(k,q)$ where $k=O(N^{\frac{1}{2}+\frac{\log(2)}{15\log(\Delta)}})$ and $q=O(N^{-\frac{1}{2}+\frac{\log(2)}{15\log(\Delta)}})$. By Bennett's Inequality,
$$
\P\left(\sum_{u\in\partial\cT_r}\ind_{\{\textrm{$u$ is \red}\}}>N^{\frac{\log(2)}{7\log(\Delta)}}\right)\leq \exp\left(-\Omega\left(N^{\frac{\log(2)}{7\log(\Delta)}}\right)\right)\, .
$$
Hence, for all $\varepsilon>0$,
$$
\P\left(\exists x\in\cR, y\in\cR\setminus \cB_x, r\in\{x,y\},\, \sum_{u\in\partial\cT_r}\bw(u)\ind_{\{\textrm{$u$ is \red}\}}>\varepsilon\right)=o(1)\, .
$$
Let us now control the weight of paths with weight smaller than $\bw_{\textsc{min}}$. To this end, consider $m=\lfloor\log N\rfloor$ independent \textsc{nbrw}s on $G$ starting at $r$, each being stopped as soon as its weight falls below $\bw_{\textsc{min}}$, and let $A$ be the event that their trajectories form a tree of height less than $t/2$.  Clearly,
$$
\P\left(A \given G \right)\geq \left(\sum_{u\in\partial\cT_r}\bw(u)\ind_{\{\bw(u)<\bw_{\textsc{min}}\}}\right)^m\, .
$$
Taking expectation and using Markov inequality, we deduce that
$$
\P\left(\sum_{u\in\partial\cT_r}\bw(u)\ind_{\{\bw(u)<\bw_{\textsc{min}}\}}>\varepsilon\right)  \leq  \frac{\P\left(A\right)}{\varepsilon^m},
$$
where the average is now taken over both the  walks and the graph. To prove that the above probability is $o(1/N^2)$, it is enough to show that $\P(A)  =  o(1)^m.$ To do so, we generate the $m$ stopped \textsc{nbrw}s one after the other, revealing types and pairs along the way, as described in Section~\ref{sec:coupling}. Given that the first $\ell-1$ walks form a tree of height less than $t/2$, the conditional probability that the $\ell^\textrm{th}$ walk also fulfills the requirement is $o(1)$, uniformly in $1\leq\ell\leq m$. Indeed, 
\begin{itemize}
\item either it attains length $s=\lceil 4\log\log N\rceil$ before leaving the graph spanned by the first $\ell-1$ trajectories and reaching an unpaired half-edge: thanks to the tree structure, there are at most $\ell-1<m$ possible trajectories to follow, each having weight at most $2^{-s}$ by~\eqref{eq:assumption-min-degree}, so the conditional probability is at most $m2^{-s}=o(1).$
\item or the remainder of its trajectory after the first unpaired half-edge $z$ has weight less than $\Delta^s\bw_{\textsc{min}}$: this part consists of at most $t/2$ half-edges which can be coupled with $(X^\star_k)_{k=1}^{t/2}$ for a total-variation cost of $O({mt^2}/{N})$, and for $N$ large enough
$$
\P_z\left(\prod_{k=1}^{t/2}\frac{1}{\deg(X_k^\star)} \leq   \Delta^s\bw_{\textsc{min}}\right)\leq \P_z\left(S_{t/2}-\frac{\mu t}{2}\geq \frac{\log(2)}{18\log(\Delta)}\log N\right)\, ,
$$
which is $o(1)$ by Lemma~\ref{lem:berry-esseen}.
\end{itemize}
\end{proof}

For each $(i,j)\in\{0,1\}^2$, define
$$
\W_{i,j}=\sum_{u\in\cF_x\cap\cH_i}\sum_{v\in\cF_y\cap\cH_j}\bw(u)\bw(v)\, ,
$$
and, for $\theta=(N(\log N)^3)^{-1}$,
$$
\W^\theta_{i,j}=\sum_{u\in\cF_x\cap\cH_i}\sum_{v\in\cF_y\cap\cH_j}\bw(u)\bw(v)\ind_{\{\bw(u)\bw(v)\leq\theta\}}\, .
$$

\begin{lemma}
\label{lem:mix-communities}
For all $\varepsilon>0$, with probability $1-o(1)$, for all $x\in\cR$ and $y\in\cR\setminus \cB_x$,
$$
1\leq (N_0-p)\frac{N}{N_0^2}\W_{0,0} +(N_1-p)\frac{N}{N_1^2}\W_{1,1} +\frac{pN}{N_0 N_1}\left(\W_{0,1}+\W_{1,0}\right)+\varepsilon \, .
$$
\end{lemma}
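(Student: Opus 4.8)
The plan is to distill the claimed inequality into two statements: that each tree carries almost all of the walk's boundary mass (which is exactly Lemma~\ref{lem:walk-in-tree}), and that, because $\alpha\gg 1/\log N$, the \nbrw\ has already equilibrated \emph{between} the two communities by time $t/2$, uniformly over roots.

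First I would perform a purely algebraic simplification. Writing $W_i^r:=\sum_{u\in\cF_r\cap\cH_i}\bw(u)$ and $W^r:=W_0^r+W_1^r$ for $r\in\{x,y\}$, the double sum factorizes: $\W_{i,j}=W_i^xW_j^y$. Using $\alpha_0=p/N_0=\alpha N_1/N$ and $\alpha_1=p/N_1=\alpha N_0/N$, the coefficients become $(N_0-p)\tfrac{N}{N_0^2}=1+\tfrac{N_1}{N_0}(1-\alpha)$, $(N_1-p)\tfrac{N}{N_1^2}=1+\tfrac{N_0}{N_1}(1-\alpha)$ and $\tfrac{pN}{N_0N_1}=\alpha$, after which the cross-terms collect into a perfect square:
\[
(N_0-p)\tfrac{N}{N_0^2}\W_{0,0}+(N_1-p)\tfrac{N}{N_1^2}\W_{1,1}+\tfrac{pN}{N_0N_1}\big(\W_{0,1}+\W_{1,0}\big)=W^xW^y+(1-\alpha)\tfrac{N^2}{N_0N_1}\,c_xc_y,
\]
with $c_r:=\tfrac{N_1}{N}W_0^r-\tfrac{N_0}{N}W_1^r=W_0^r-\tfrac{N_0}{N}W^r$. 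Since $N_0\asymp N_1\asymp N$, it is then enough to prove that, with probability $1-o(1)$, simultaneously for all $x\in\cR$ and $y\in\cR\setminus\cB_x$ one has $W^x,W^y\ge 1-\varepsilon'$ and $|c_x|,|c_y|\le\varepsilon'$ for an arbitrarily small fixed $\varepsilon'$; the right-hand side above is then at least $(1-\varepsilon')^2-C(\varepsilon')^2\ge 1-\varepsilon$ once $\varepsilon'$ is chosen small enough.

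The bound $W^r\ge 1-\varepsilon'$ is immediate from Lemma~\ref{lem:walk-in-tree}: weights are conserved along the tree (each non-leaf node distributes its weight equally among its children), so $\sum_{u\in\partial\cT_r}\bw(u)=1$ and thus $W^r=1-\sum_{u\in\partial\cT_r\setminus\cF_r}\bw(u)$. For the smallness of $c_r$ I would read $W_i^r$ probabilistically: by construction of the exploration, $W_i^r=\P_r\big(X_{t/2}\in\cH_i,\ (X_k)_{k\le t/2}\subset\cT_r\big)$, the probability that the \nbrw\ from $r$ follows $\cT_r$ for $t/2$ steps and lands in $\cH_i$. Now couple $(X_k)_{k\le t/2}$ with the auxiliary chain $(X_k^\star)_{k\le t/2}$ of Section~\ref{sec:coupling}: growing $\cT_r$ in \nbrw-BFS order from $r$ is exactly the annealed exploration of Section~\ref{sec:coupling} among hitherto-unpaired half-edges, so the two processes agree unless either the conditional outgoing probability deviates from $\alpha_i$, or $X^\star$ lands on a half-edge already touched by the (joint) exploration of $\cT_x$ and $\cT_y$. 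By~\eqref{eq:bound-revealed} at most $|\cT_x\cup\cT_y|+t/2=O(N^{5/8})$ half-edges are ever touched, a \emph{deterministic} bound, so each deviation has probability $O(N^{-3/8})$ per step and the total coupling cost up to time $t/2$ is $O(tN^{-3/8})=o(1)$, uniformly in $r$. Hence $W_i^r=\P_r(X^\star_{t/2}\in\cH_i)+O(\varepsilon')$ (combining this with $W^r\ge 1-\varepsilon'$), and plugging the explicit values of~\eqref{eq:prob-coupling} into $c_r$ gives, for $r\in\cH_j$, $\ c_r=(-1)^j\tfrac{N_{1-j}}{N}(1-\alpha)^{t/2}+O(\varepsilon')$, so in all cases $|c_r|\le(1-\alpha)^{t/2}+O(\varepsilon')$. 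Finally, $t/2\asymp\log N$ (as $t=(1+o(1))\tfrac{\log N}{\mu}$ with $\mu\asymp 1$) while $\alpha\gg 1/\log N$, so $\alpha t/2\to\infty$, $(1-\alpha)^{t/2}\le e^{-\alpha t/2}\to 0$, and $|c_r|\le\varepsilon'$ for every root once $N$ is large. Together with $W^r\ge 1-\varepsilon'$ this closes the proof.

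The hard part will be the uniformity over roots in the last step — making the coupling with $X^\star$ control $W_i^r$ for \emph{all} $x\in\cR$ at once. This is precisely why the weight threshold $\bw_{\textsc{min}}$ is built into the exploration: it forces the \emph{deterministic} bound $|\cT_x\cup\cT_y|=O(N^{5/8})$, letting the coupling error be absorbed without a union bound over the up to $N$ starting points (which would be far too lossy, the per-root cost in~\eqref{eq:coupling} being only $O(t^2/N)$). A minor point to treat with care is that $\cT_x$ and $\cT_y$ are grown jointly and kept disjoint via the \red\ coloring, so that when coupling the walk from $x$ one must count the half-edges of $\cT_y$ among the already-used ones too; this costs only a further $O(N^{5/8})$ and is harmless.
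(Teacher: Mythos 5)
Your algebraic preparation is sound: the factorization $\W_{i,j}=W_i^xW_j^y$, the identity rewriting the left-hand side as $W^xW^y+(1-\alpha)\frac{N^2}{N_0N_1}\,c_xc_y$, and the reduction to ``$W^r\geq 1-\varepsilon'$ and $|c_r|\leq\varepsilon'$ for all roots'' all check out. The first half ($\sum_{u\in\cF_r}\bw(u)\geq 1-\varepsilon'$) is exactly how the paper uses Lemma~\ref{lem:walk-in-tree}. Moreover, since $W_0^r+W_1^r\leq 1$, your condition $|c_r|\leq\varepsilon'$ is (up to constants) equivalent to the one-sided estimates $\sum_{u\in\cF_r\cap\cH_i}\bw(u)\leq \frac{N_i}{N}+\varepsilon'$ for $i=0,1$, which is precisely the statement the paper isolates after its own (equivalent, if less slick) reduction via $1=\frac{N_0-p}{N}+\frac{N_1-p}{N}+\frac{2p}{N}$ and $\W_{i,j}\geq\frac{N_iN_j}{N^2}-\varepsilon$. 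So up to this point you are on the paper's track.

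The gap is in how you establish $|c_r|\leq\varepsilon'$. The identity $W_i^r=\P^G_r\big(X_{t/2}\in\cH_i,\ (X_k)_{k\leq t/2}\subset\cT_r\big)$ exhibits $W_i^r$ as a \emph{quenched} probability, i.e.\ a random variable of the pairing; your coupling with $X^\star$ (aided by the deterministic bound~\eqref{eq:bound-revealed} on the number of revealed half-edges) only bounds, under the \emph{joint} law of graph and walk, the probability that the trajectory deviates from $X^\star$. What this delivers is $\E[W_i^r]\leq\frac{N_i}{N}+o(1)$ uniformly in $r$ --- a first-moment statement. It does not imply that the random variable $W_i^r$ itself is close to $\frac{N_i}{N}$ with high probability even for a single root (Markov from this first moment only gives a constant such as $\big(\tfrac{N_i}{N}+o(1)\big)/\big(\tfrac{N_i}{N}+\varepsilon'\big)$), let alone simultaneously for the up to $N^2$ pairs $(x,y)$. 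Your assertion that the deterministic bound $|\cT_x\cup\cT_y|=O(N^{5/8})$ ``lets the coupling error be absorbed without a union bound'' conflates two things: that bound controls the per-step collision probability inside one exploration, not the graph-to-graph and root-to-root fluctuations of $W_i^r$. This is exactly where the paper's proof brings in the $m=\lfloor(\log N)^2\rfloor$ independent-walks device: with $B$ the event that $m$ independent \nbrw{}s of length $t/2$ started at $r$ form a tree and all end in $\cH_i$, one has $\P(B\given G)\geq\big(\sum_{u\in\cF_r\cap\cH_i}\bw(u)\big)^m$, while the annealed bound $\P(B)\leq\big(\tfrac{N_i}{N}+\varepsilon/2\big)^m$ is obtained by generating the walks sequentially (this is where your coupling with $X^\star$ and~\eqref{eq:prob-coupling}, together with the $m2^{-s}$ bound for a walk trapped in the earlier trajectories, actually enter); Markov's inequality then yields a per-root failure probability $e^{-\Omega((\log N)^2)}=o(1/N^2)$, which \emph{does} survive the union bound over all pairs of roots. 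Without this step (or some other genuine concentration argument for $W_i^r$), your final step does not go through.
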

\begin{proof}[Proof of Lemma~\ref{lem:mix-communities}]
Note that
$$
1= \frac{N_0-p}{N} +\frac{N_1-p}{N} +\frac{2p}{N}\, ,
$$
so that to prove the lemma, it is enough to establish that for all $\varepsilon$, with probability $1-o(1)$, for all $x\in\cR$ and $y\in\cR\setminus \cB_x$, for all $i,j\in\{0,1\}$, $\W_{i,j}\geq \frac{N_i N_j}{N^2}-\varepsilon$. Now, the event $\left\{\W_{i,j} < \frac{N_i N_j}{N^2}-\varepsilon\right\}$ is included in
$$
\left\{\sum_{u\in\cF_x\cap\cH_i}\bw(u)<\frac{N_i}{N}-\frac{\varepsilon}{2}\right\} \cup \left\{\sum_{v\in\cF_y\cap\cH_j}\bw(u)<\frac{N_j}{N}-\frac{\varepsilon}{2}\right\}\, .
$$
By Lemma~\ref{lem:walk-in-tree}, with probability $1-o(1)$, for all $x\in\cR$, $y\in\cR\setminus\cB_x$ and $r\in\{x,y\}$, we have $\sum_{u\in \cF_r}\bw(u)\geq 1-\varepsilon/4$, so that it remains to show that for all $\varepsilon>0$, $r\in\{x,y\}$, and $i\in\{0,1\}$,
$$
\P\left(\sum_{u\in\cF_r\cap\cH_i}\bw(u)>\frac{N_i}{N}+\varepsilon\right)=o\left(\frac{1}{N^2}\right)\, .
$$
To do so, we proceed as in the proof of Lemma~\ref{lem:walk-in-tree}. Consider $m=\lfloor (\log N)^2\rfloor$ independent \textsc{nbrw}s on $G$ starting at $r$, each of length $t/2$, and let $B$ be the event that their trajectories form a tree and that they all end in $\cH_i$. We have
$$
\P\left(\sum_{u\in\cF_r\cap\cH_i}\bw(u)>\frac{N_i}{N}+\varepsilon\right)  \leq  \frac{\P\left(B\right)}{\left(N_i/N+\varepsilon\right)^m},
$$
To prove that the above probability is $o(1/N^2)$, it is enough to show that
%\begin{eqnarray*}
$\P(B)  \leq  \left(N_i/N+\varepsilon/2\right)^m.$
%\end{eqnarray*} 
Generate the $m$ \textsc{nbrw}s one after the other, revealing types and pairs along the way, as described in Section~\ref{sec:coupling}. Given that the first $\ell-1$ walks form a tree and all end in $\cH_i$, the conditional probability that the $\ell^\textrm{th}$ walk also does is smaller than $N_i/N+\varepsilon/2$, uniformly in $1\leq\ell\leq m$. Indeed, 
\begin{itemize}
\item either it attains length $s=\lceil 4\log\log N\rceil$ before leaving the graph spanned by the first $\ell-1$ trajectories and reaching an unpaired half-edge: thanks to the tree structure, there are at most $\ell-1<m$ possible trajectories to follow, each having weight at most $2^{-s}$ by~\eqref{eq:assumption-min-degree}, so the conditional probability is at most $m2^{-s}=o(1).$
\item or it encounters an unpaired half-edge $z$ at some time $s'<s$ and the remainder of its trajectory can be coupled with $(X^\star_k)_{k=s'+1}^{t/2}$ for a total-variation cost of $O({mt^2}/{N})$. By~\eqref{eq:prob-coupling}, and since $t\gg 1/\alpha$,
$$
\P_z\left( X^\star_{t/2-s'}\in \cH_i \right)\leq N_i/N+\varepsilon/2 \, .
$$
\end{itemize}
\end{proof}

\begin{lemma}\label{lem:stein}
For all $\varepsilon>0$, with probability $1-o(1)$, for all $x\in\cR$ and $y\in\cR\setminus \cB_x$,
$$
NP^t(x,\eta(y))\geq (N_0-p)\frac{N}{N_0^2}\W^\theta_{0,0}+(N_1-p)\frac{N}{N_1^2}\W^\theta_{1,1}+\frac{pN}{N_0 N_1}\left(\W^\theta_{0,1}+\W^\theta_{1,0}\right)-\varepsilon\, .
$$
\end{lemma}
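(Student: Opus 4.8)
The plan is to run the exploration process of Section~\ref{subsec:upper-bound-cutoff}, condition on everything it reveals, bound $NP^t(x,\eta(y))$ from below by a truncated sum of pairing indicators, identify its conditional expectation with the right-hand side of the claimed inequality, and finally show that this truncated sum concentrates strongly enough that a union bound over the (at most $N^2$) pairs $(x,y)$ with $x\in\cR$, $y\in\cR\setminus\cB_x$ still goes through.

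\emph{Step 1: a deterministic lower bound.} Condition on the exploration $\cE$, that is, on the trees $\cT_x,\cT_y$, the weights $\bw(\cdot)$, the leaf sets $\cF_x,\cF_y$, and the types and pairs of all half-edges revealed during the exploration. For $u\in\cF_x$ the tree $\cT_x$ provides a unique path of length $t/2$ from $x$ to $u$, and along this path $\deg(z_i)=\deg(\eta(z_{i-1}))$, so this path contributes exactly $\bw(u)$ to $P^{t/2}(x,u)$; hence $P^{t/2}(x,u)\ge\bw(u)$, and likewise $P^{t/2}(y,v)\ge\bw(v)$ for $v\in\cF_y$. Inserting this into~\eqref{eq:crucial}, keeping only $u\in\cF_x$, $v\in\cF_y$ and the pairs with $\bw(u)\bw(v)\le\theta$, one gets $NP^t(x,\eta(y))\ge Z^\theta$, where
\[
Z^\theta \;=\; N\sum_{i,j}\ \sum_{\substack{u\in\cF_x\cap\cH_i,\ v\in\cF_y\cap\cH_j\\ \bw(u)\bw(v)\le\theta}}\bw(u)\bw(v)\,\ind_{\{\eta(u)=v\}}\,.
\]
Here $\cF_x\cap\cF_y=\emptyset$ because the exploration builds \emph{disjoint} trees, and $\sum_{i,j}\W^\theta_{i,j}\le\big(\sum_{u\in\cF_x}\bw(u)\big)\big(\sum_{v\in\cF_y}\bw(v)\big)\le 1$.

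\emph{Step 2: the conditional mean.} The only randomness left after $\cE$ is the completion of $\eta$: type each still-untyped half-edge (uniformly, independently in $\cH_0$ and $\cH_1$), pair the remaining internal half-edges inside each community uniformly, and match the remaining outgoing half-edges across communities uniformly. Let $a_i$ (resp. $b_i$) be the number of outgoing (resp. internal) half-edges of $\cH_i$ revealed during $\cE$; by~\eqref{eq:bound-revealed} both are $O(N^{5/8})$, and $a_0=a_1$ since each revealed outgoing pair is cross-community. A direct count then gives, for still-untyped unpaired $u,v$,
\[
\P(\eta(u)=v\mid\cE)=\frac{N_i-p-b_i}{(N_i-a_0-b_i)(N_i-a_0-b_i-1)}=(1-o(1))\tfrac{N_i-p}{N_i^2}\qquad(u,v\in\cH_i),
\]
\[
\P(\eta(u)=v\mid\cE)=\frac{p-a_0}{(N_i-a_0-b_i)(N_{1-i}-a_0-b_{1-i})}=(1-o(1))\tfrac{p}{N_0N_1}\qquad(u\in\cH_i,\ v\in\cH_{1-i}),
\]
where we used $N_i-p\asymp N$ (a consequence of \eqref{eq:assumption:p}--\eqref{eq:assumption-N0-N1}) and, under the standing assumption $\alpha\gg 1/\log N$ of this section, $p\gg N/\log N\gg N^{5/8}$, so that the $o(1)$ is deterministic given the size bound. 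Multiplying by $N$ reproduces \emph{exactly} the coefficients $(N_i-p)N/N_i^2$ and $pN/(N_0N_1)$, so $\E[Z^\theta\mid\cE]$ equals $(1-o(1))$ times the bracket in the statement; since that bracket is $O(1)$, this is at least the bracket minus $\varepsilon/2$ for $N$ large.

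\emph{Step 3: concentration, and the main obstacle.} It remains to prove $\P\big(Z^\theta<\E[Z^\theta\mid\cE]-\varepsilon/2\bigm|\cE\big)=o(N^{-2})$ uniformly, after which a union bound over the at most $N^2$ pairs $(x,y)$ and Steps~1--2 finish the proof. A plain Chebyshev bound only gives $o(1)$ here, not $o(N^{-2})$, so one genuinely needs a martingale argument, and this is where the truncation level $\theta=(N(\log N)^3)^{-1}$ is used. View $Z^\theta$ as the terminal value of the Doob martingale $(\E[Z^\theta\mid\cF_k])_k$ obtained by revealing the completion of $\eta$ one type/pair at a time, processing the types and partners of the leaves of $\cF_x\cup\cF_y$ first (after which $Z^\theta$ is already determined). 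Each summand of $Z^\theta$ is at most $N\theta=(\log N)^{-3}$, so every martingale increment is $O((\log N)^{-3})$; moreover the only non-negligible increments occur at the $O(N^{5/8})$ leaf-processing steps, at which the pairing pools still have size $\Theta(N)$, so each such step has conditional second moment $O(N\theta\cdot\bw(u)\bw(v))$ and, using $\sum_{(u,v)}\bw(u)\bw(v)\le 1$, the predictable quadratic variation is $O(N\theta)=O((\log N)^{-3})$. Freedman's inequality then yields
\[
\P\big(Z^\theta-\E[Z^\theta\mid\cE]\le-\varepsilon/2\bigm|\cE\big)\;\le\;\exp\!\left(-\,\frac{(\varepsilon/2)^2}{2\,\big(O((\log N)^{-3})+O(\varepsilon(\log N)^{-3})\big)}\right)\;=\;\exp\!\big(-\Omega_\varepsilon\big((\log N)^3\big)\big)\;=\;o(N^{-2}),
\]
which is precisely the point of the choice of $\theta$: the per-summand scale $N\theta=(\log N)^{-3}$ makes the exponent $\Omega((\log N)^3)=\omega(\log N)$, beating the $N^2$ union bound. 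The most delicate part of the write-up is the bookkeeping in this last step, namely verifying that among the $\Theta(N)$ steps of the completion only the $O(N^{5/8})$ leaf steps matter and that their conditional variances sum to $O(N\theta)$; everything else is a routine combination of Steps~1--3.
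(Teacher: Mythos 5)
Your proposal is correct in substance but takes a genuinely different route for the concentration step, which is the heart of the lemma. The paper argues in two stages: it first conditions on the types of the still-unpaired half-edges, so that the remaining randomness is a uniform pairing inside each community plus a uniform matching across, and applies off-the-shelf concentration results there (Lemma~6.1 of~\citep{ben-salez} for the pairings, the left-tail refinement of \citep[Proposition~1.1]{chatterjee2007stein} for the matching, both with the summands truncated at $N\theta$ so the tails are $o(N^{-2})$); it then averages over the random types of the leaves in a second stage, using the strong Rayleigh property of Bernoullis conditioned on their sum together with \citep[Theorem~3.2]{pemantle2014concentration}, and only at that point identifies $\bE Y$ and $\bE Z$ with the coefficients $(N_i-p)N/N_i^2$ and $pN/(N_0N_1)$. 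You instead integrate the types directly into the marginal pair probabilities $\P(\eta(u)=v\mid\cE)$ (your hypergeometric-style formulas are the right ones -- note the types are a uniform fixed-size subset, not independent, so your word ``independently'' should only refer to independence between $\cH_0$ and $\cH_1$), which gives the conditional mean in one step, and you then handle types and pairing jointly by a single Doob martingale over the leaves-first completion of $\eta$, concluding with Freedman's inequality. Your quantitative claims are the right ones and play exactly the role the truncation plays in the paper: increments are $N\theta+O(2^{-t/2})+O(1/N)=O((\log N)^{-3})$, the predictable variation is $O(N\theta)$ because $Nq_{uv}=O(1)$ and $\sum_{u\in\cF_r}\bw(u)\le 1$, so the per-pair failure probability is $\exp(-\Omega_\varepsilon((\log N)^3))=o(N^{-2})$, beating the union bound over pairs just as the paper's $\exp(-\varepsilon^2|\cI_i|/(4\theta N^2))$ and $\exp(-\varepsilon^2|\cO_i|/(4\theta N^2))$ do (the latter using $p\gg N/\log N$, which you also invoke, correctly, to keep the outgoing pool of size $\Theta(p)\gg N^{5/8}$). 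What each approach buys: yours is more elementary and self-contained (no strong Rayleigh / Pemantle--Peres, no exchangeable-pairs concentration) and treats the four $(i,j)$ blocks uniformly, at the price of the increment/variance bookkeeping you sketch but do not fully execute (leaves-first ordering so the martingale terminates after $O(N^{5/8})$ steps, the $O(\bw(v))$ corrections when a revealed partner is itself a leaf, the $O(1/N)$ pool-size adjustments); the paper's version outsources exactly this bookkeeping to cited concentration theorems. I see no step in your outline that would fail, so this is a valid alternative proof once that bookkeeping is written out.
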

\begin{proof}[Proof of Lemma~\ref{lem:stein}]
In the sum \eqref{eq:crucial}, retaining only those paths that stay in $\cT_x\cup\cT_y$ and that have weight less than $\theta$, we have
\begin{equation*}
%\label{eq:eq0:proof-stein}
N P^t(x,\eta(y))\geq N \sum_{u\in\cF_x}\sum_{v\in\cF_y}\omega_{uv}\ind_{\{\eta(u)=v\}}\, ,
\end{equation*}
where $\omega_{uv}=\bw(u)\bw(v)\ind_{\{\bw(u)\bw(v)\leq\theta\}}$. Let us first condition on the types of the unpaired half-edges at the end of the exploration stage, and average over the remaining pairing. For $i\in\{0,1\}$, let $\cI_i$ (resp. $\cO_i$) be the set of unpaired internal (resp. outgoing) half-edges of $\cH_i$ at the end of the exploration stage. By applying  \citep[Lemma 6.1]{ben-salez} to the sets of unpaired internal half-edges (and noticing that, for us, the quantity denoted by $m$ there satisfies $\sum_{\substack{u\in\cF_x\cap\cI_i \\ v\in\cF_y\cap\cI_i}}\omega_{uv}\frac{N}{N_i-p}\leq m\leq \frac{N}{|\cI_i|-1}$), we have, for $i\in\{0,1\}$ and for all $\varepsilon>0$,
\begin{equation}\label{eq:eq1:proof-stein}
\P\left(N\!\!\!\!\sum_{\substack{u\in\cF_x\cap\cI_i \\ v\in\cF_y\cap\cI_i}}\!\!\omega_{uv}\left(\ind_{\{\eta(u)=v\}}-\frac{1}{N_i-p}\right)<-\varepsilon \, \Big| \, |\cI_i| \right)
\leq \exp\left(-\frac{\varepsilon^2(|\cI_i|-1)}{4\theta N^2}\right).
\end{equation}
Combining~\eqref{eq:bound-revealed}, \eqref{eq:assumption-N0-N1} and~\eqref{eq:assumption:p}, we have $|\cI_i|\asymp N_i -p\asymp N$, entailing that the right-hand side in~\eqref{eq:eq1:proof-stein} is $o(1/N^2)$. Now, applying  \citep[Proposition 1.1]{chatterjee2007stein} (or rather its refinement for the left tail given in its proof, with the summands here bounded by $N\theta$ instead of $1$ there, and noticing that, for us, the quantity denoted by $C=2\E X$ there satifies $\sum_{\substack{u\in\cF_x\cap\cO_i \\ v\in\cF_y\cap\cO_{1-i}}}\omega_{uv}\frac{N}{p}\leq \E X\leq \frac{N}{|\cO_i|}$), we have for $i\in\{0,1\}$ and for all $\varepsilon>0$,
\begin{equation}\label{eq:eq2:proof-stein}
\P\left(N \sum_{\substack{u\in\cF_x\cap\cO_i \\ v\in\cF_y\cap\cO_{1-i}}}\omega_{uv}\left(\ind_{\{\eta(u)=v\}}-\frac{1}{p}\right)<-\varepsilon \, \Big| \, |\cO_i| \right)
\leq \exp\left(-\frac{\varepsilon^2|\cO_i|}{4\theta N^2}\right)\, .
\end{equation}
Again, \eqref{eq:bound-revealed} yields $|\cO_i|\asymp p$, and since by assumption $p/N\gg 1/\log N$, the right-hand side in~\eqref{eq:eq2:proof-stein} is also $o(1/N^2)$. Our second task is to average over the types of half-edges in $\cF_x\cup\cF_y$. To this end, for $i\in\{0,1\}$, let $\cU_i$ be the set of unpaired half-edges of $\cH_i$ at the end of the exploration stage and write
$$
Y=\frac{N}{N_i-p}\sum_{\substack{u\in\cF_x\cap\cI_i \\ v\in\cF_y\cap\cI_i}}\omega_{uv}=\sum_{u,v\in (\cF_x \cup \cF_y)\cap\cU_i} q_{uv} B_u B_v\, ,
$$
where $q_{uv}=\frac{N}{N_i-p}\omega_{uv} \ind_{\{u\in\cF_x\}}\ind_{\{v\in\cF_y\}}$ and $B_u=\ind_{\{u\in\cI_i\}}$. Conditionally on $\cT_x$ and $\cT_y$, the sequence $(B_u)_{u\in (\cF_x \cup \cF_y)\cap\cU_i}$ enjoys a strong negative dependence property known as the strong Rayleigh property~\citep{borcea2009negative} (the sequence $(B_u)_{u\in \cU_i}$ enjoys it as a sequence of Bernoulli variables conditioned on its sum, and any subsequence of a strong Rayleigh sequence is also strong Rayleigh). Observing that $Y$ is a Lipschitz function of $(B_u)$ with constant 
$$
\frac{N}{N_i-p}\theta |\cF_x \cup \cF_y|=O( N^{-3/8})
$$ 
by~\eqref{eq:bound-revealed}. Applying \citep[Theorem 3.2]{pemantle2014concentration}, and using that, again thanks to~\eqref{eq:bound-revealed}, the quantity denoted by $\mu$ in their paper here is $O(N^{5/8})$, we have, for all $\varepsilon>0$,
$$
\bP\left(Y-\bE Y<-\varepsilon  \right)\leq \exp\left(-\Omega(N^{1/8})\right)\, ,
$$
where $\bP$ and $\bE$ are the probability law and expectation given $\cT_x\cup\cT_y$. Similarly, let
$$
Z=\frac{N}{p}\sum_{\substack{u\in\cF_x\cap\cO_i \\ v\in\cF_y\cap\cO_{1-i}}}\omega_{uv}=\sum_{u,v\in \cF_x \cup \cF_y} q'_{uv} B'_u B'_v\, ,
$$
where now $q'_{uv}=\frac{N}{p}\omega_{uv} \ind_{\{u\in\cF_x\cap\cH_i\}}\ind_{\{v\in\cF_y\cap\cH_{1-i}\}}$ and $B'_u=\ind_{\{u\in\cO_i\cup\cO_{1-i}\}}$. The sequence $(B'_u)_{u\in \cF_x \cup \cF_y}$ still enjoys the strong Rayleigh property (the sequences $(B'_u)_{u\in\cU_i}$ and $(B'_u)_{u\in\cU_{1-i}}$ both enjoy it as sequences of Bernoulli conditioned on their sum and the concatenation of two independent strong Rayleigh sequences is also strong Rayleigh; and, as already mentioned, if a sequence is strong Rayleigh, any of its subsequences is too). The variable $Z$ is a Lipschitz function with constant $\frac{N}{p}\theta |\cF_x \cup \cF_y|=O(N^{-3/8})$ by~\eqref{eq:bound-revealed} and our assumption that $p\gg \frac{N}{\log N}$. Hence another application of \citep[Theorem 3.2]{pemantle2014concentration} yields
$$
\bP\left(Z-\bE Z<-\varepsilon  \right)\leq \exp\left(-\Omega(N^{1/8})\right)\, .
$$
The proof is then concluded by noticing that
$$
\bE Y= (1+o(1))(N_i-p)\frac{N}{N_i^2}W^\theta_{i,i}\, ,
$$
and
$$
\bE Z= (1+o(1))\frac{pN}{N_0 N_1} W^\theta_{i,1-i}\, ,
$$
\end{proof}

Combining Lemma~\ref{lem:mix-communities} and~\ref{lem:stein}, we obtain that for all $\varepsilon>0$, with probability $1-o(1)$, for all $x\in\cR$, and $y\in\cR\setminus\cB_x$,
$$
\left(1-N P^t(x,\eta(y)\right)_+ \leq (N_0-p)\frac{N}{N_0^2}\overline{\W}^\theta_{0,0}+(N_1-p)\frac{N}{N_1^2}\overline{\W}^\theta_{1,1}+\frac{pN}{N_0 N_1}\left(\overline{\W}^\theta_{0,1}+\overline{\W}^\theta_{1,0}\right)+\varepsilon\, ,
$$
where $\overline{\W}^\theta_{i,j}=\W_{i,j}-\W^\theta_{i,j}$. The proof of Proposition~\ref{prop:main} will then be concluded by the following lemma.

\begin{lemma}
\label{lem:final}
For all $\varepsilon>0$, with probability $1-o(1)$, for all $x\in\cR$, and $y\in\cR\setminus\cB_x$, for all $i,j\in\{0,1\}$,
$$
\overline{\W}^\theta_{i,j}\leq \frac{N_i N_j}{N^2}\overline{\Phi}(\lambda)+\varepsilon\, .
$$
\end{lemma}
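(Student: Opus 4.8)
\emph{Overall strategy.} The plan is to reduce the claim, via a concentration argument in the spirit of Lemmas~\ref{lem:walk-in-tree} and~\ref{lem:mix-communities}, to a purely analytic statement about the simple chain $(X_k^\star)$, and then to prove that statement using Lemma~\ref{lem:berry-esseen} together with the fact that, when $\alpha\gg\tfrac1{\log N}$, there is room between the relaxation time $1/\alpha$ of the community structure and the cutoff window $\sqrt{\nu^2\log N/\mu}$.

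\emph{Reduction to the simple chain.} Every leaf $u\in\cF_x$ is the endpoint of a non-backtracking path of length $t/2$ issued from $x$, with $\bw(u)$ equal to the probability that the \nbrw\ on $G$ follows it; hence, deterministically,
$$
\overline\W^\theta_{i,j}\ \le\ \Psi^G_{x,y}:=\P^G_{x,y}\!\left(X_{t/2}\in\cH_i,\ Y_{t/2}\in\cH_j,\ \prod_{k=1}^{t/2}\frac{1}{\deg(X_k)\deg(Y_k)}>\theta\right),
$$
where under $\P^G_{x,y}$, $X$ and $Y$ are independent \nbrw s on $G$ from $x$ and $y$. Running $m=\lceil C_\varepsilon\log N\rceil$ independent pairs of such walks and coupling each walk with an independent copy of $(X_k^\star)$ at total-variation cost $O(mt^2/N)=o(1)$ via~\eqref{eq:coupling}, I obtain $\E[(\Psi^G_{x,y})^m]\le(\P^\star_{x,y}+o(1))^m$, where $\P^\star_{x,y}$ is defined exactly as above but with two independent copies $(X^\star_k),(Y^\star_k)$ in place of $X,Y$. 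By Markov's inequality and a union bound over the $\le N^2$ admissible pairs $(x,y)$, the lemma (for each of the four pairs $(i,j)$) will follow once we show
$$
\max_{x,y\in\cH}\P^\star_{x,y}\ =\ \frac{N_iN_j}{N^2}\,\overline\Phi(\lambda)+o(1)\, .
$$

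\emph{Evaluation of $\P^\star_{x,y}$.} I would condition on the community sequences $(c^{(x)}_k)_{k\le t/2}$ and $(c^{(y)}_k)_{k\le t/2}$ of the two walks (so $X^\star_k\in\cH_{c^{(x)}_k}$): given these, the $\log\deg(X^\star_k)$ are independent, and $\{X^\star_{t/2}\in\cH_i\}=\{c^{(x)}_{t/2}=i\}$. Since $\alpha\gg\tfrac1{\log N}$ and $t\asymp\log N$, one has $\tfrac1\alpha=o(t)$, so there exists a scale $L$ with $\tfrac1\alpha\ll L\ll\sqrt{t/\alpha}$; write $S^{(x)}:=\sum_{k=1}^{t/2}\log\deg(X^\star_k)=\widehat S^{(x)}+\big(\Delta^{(x)}-\mu L\big)$ with $\widehat S^{(x)}:=\sum_{k=1}^{t/2-L}\log\deg(X^\star_k)+\mu L$, which is measurable with respect to $\cF^{(x)}_{t/2-L}:=\sigma(X^\star_0,\dots,X^\star_{t/2-L})$, and $\Delta^{(x)}:=\sum_{k=t/2-L+1}^{t/2}\log\deg(X^\star_k)$. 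The choice of $L$ serves two purposes: by~\eqref{eq:prob-coupling}, $\E[\ind_{X^\star_{t/2}\in\cH_i}\mid\cF^{(x)}_{t/2-L}]=\tfrac{N_i}{N}+O((1-\alpha)^L)=\tfrac{N_i}{N}+o(1)$ uniformly, so the endpoint community decouples from $\widehat S^{(x)}$; and a second-moment computation using~\eqref{eq:prob-coupling}, assumption~\eqref{eq:assumption:mu_0-mu_1} and $\Delta=O(1)$ gives $\E[(\Delta^{(x)}-\mu L)^2]=o(\nu^2\log N/\mu)$, i.e. $\Delta^{(x)}-\mu L$ is negligible at the window scale. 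Performing the same splitting for the $y$-walk and using that the two walks are independent yields, uniformly in $x,y$,
$$
\P^\star_{x,y}=\frac{N_iN_j}{N^2}\,\P\big(\widehat S^{(x)}+\widehat S^{(y)}<\log(1/\theta)\big)+o(1)\, .
$$
Finally, $\widehat S^{(x)}+\widehat S^{(y)}$ has mean $\mu t$ and, by Lemma~\ref{lem:berry-esseen} applied to the two length-$(t/2-L)$ walks (here $t/2-L\gg 1/\alpha$), is asymptotically $\mathcal N\big(\mu t,(1+o(1))\nu^2 t\big)$; since $t=\tfrac{\log N}{\mu}+(\lambda+o(1))\sqrt{\nu^2\log N/\mu^3}$, $\log(1/\theta)=\log N+3\log\log N$, and $\nu^2/\mu=\Omega(1)$ (so that $\log\log N$ is negligible against $\nu\sqrt t$), one gets $\tfrac{\log(1/\theta)-\mu t}{\nu\sqrt t}\to-\lambda$, whence $\P(\widehat S^{(x)}+\widehat S^{(y)}<\log(1/\theta))\to\Phi(-\lambda)=\overline\Phi(\lambda)$.

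\emph{Where the difficulty lies.} The concentration step is routine, essentially identical to the earlier lemmas. The main obstacle is the evaluation of $\P^\star_{x,y}$, and specifically the asymptotic decoupling between the community label of $X^\star_{t/2}$ and the window-scale fluctuations of $S^{(x)}$: this is precisely where $\alpha\gg\tfrac1{\log N}$ is used — it is what produces a scale $\tfrac1\alpha\ll L\ll\sqrt{t/\alpha}$ on which the last $L$ steps both mix the community and contribute negligibly to the window-scale randomness — and where assumption~\eqref{eq:assumption:mu_0-mu_1} is needed, to keep the additional variance generated by community switches comparable to $\nu^2$.
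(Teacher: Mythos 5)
Your treatment of the annealed quantity $\P^\star_{x,y}$ --- the part you identify as the main difficulty --- is correct, and it is a genuinely different decoupling from the paper's: the paper truncates the weight condition at time $t/2-\Lambda\alpha^{-1}$ inside the event itself, bounds the endpoint-community factors uniformly via~\eqref{eq:prob-coupling}, and uses reversibility of $(X^\star_k)$ to concatenate the two walks before invoking Lemma~\ref{lem:berry-esseen}; you instead condition at time $t/2-L$ with $\frac1\alpha\ll L\ll$ window, kill the last block by a second-moment estimate, and exploit the independence of the two walks to add the two CLTs. Both routes rest on the same inputs ($1/\alpha=o(t)$, \eqref{eq:prob-coupling}, Lemma~\ref{lem:berry-esseen}) and give the same limit; yours is arguably a little cleaner at the concatenation step.

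The gap is in the step you dismiss as routine. Replacing $\overline{\W}^\theta_{i,j}$ by $\Psi^G_{x,y}$ discards exactly the structural information that makes the moment argument of Lemmas~\ref{lem:walk-in-tree} and~\ref{lem:mix-communities} (and of the paper's proof here) work: there, the event whose annealed probability is bounded contains ``the trajectories form a cycle-free graph'', and this comes for free on the quenched side precisely because $\overline{\W}^\theta_{i,j}$ only charges paths lying in the two disjoint trees $\cT_x,\cT_y$, so that $P(C\given G)\ge(\overline{\W}^\theta_{i,j})^m$ still holds. On the annealed side this condition is indispensable: when the $\ell$-th walk is generated, the branch ``it stays in the already revealed graph for $s=\lceil 4\log\log N\rceil$ steps'' is bounded by (number of non-backtracking trajectories of length $s$ in the revealed graph)$\times 2^{-s}$, and the bound ``at most $m$ trajectories'' is only available when the revealed trajectories are cycle-free. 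With $\Psi^G_{x,y}$ no such condition is present in $\E[(\Psi^G_{x,y})^m]$, and the obvious patch --- peeling off $\P(\text{the union of the }2m\text{ walks has excess}\ge K)$ additively --- does not go through as stated: that probability is only polynomially small in $N$, and after division by $\bigl(\tfrac{N_iN_j}{N^2}\overline{\Phi}(\lambda)+\varepsilon\bigr)^m$ with $m\asymp\log N$ (a quantity of order $N^{-c}$ with $c$ growing like $\log(1/\varepsilon)$ as $\varepsilon\to0$ or $\lambda\to\infty$) it need not be $o(N^{-2})$, unless $K$ is chosen depending on $\varepsilon,\lambda$ and the excess constraint is threaded through the sequential conditioning. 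Relatedly, the one-line justification ``coupling each walk with an independent copy of $(X^\star_k)$ at total-variation cost $O(mt^2/N)$'' is not literally valid: all $m$ walks start at the same half-edge $x$ and retrace one another initially with probability close to one, so no small-TV coupling with independent copies exists; the coupling of Section~\ref{sec:coupling} only applies to the portion of each walk after it first leaves the revealed graph, which is exactly the dichotomy whose ``stays inside'' branch needs the cycle-free structure. The clean fix is the paper's: keep the tree-restricted quantity $\overline{\W}^\theta_{i,j}$, build cycle-freeness (and, if you follow the paper's decoupling, the truncated weight condition) into the event for the $m$ pairs, and then run the sequential bound.
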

\begin{proof}[Proof of Lemma~\ref{lem:final}]
Set $m=\lceil (\log N)^2\rceil$ and let $X^{(1)},\dots,X^{(m)}$ be $m$ independent \textsc{nbrw}s of length $t/2$ started at $x$, and $Y^{(1)},\dots,Y^{(m)}$ be $m$ independent \textsc{nbrw}s of length $t/2$ started at $y$, independent of $X^{(1)},\dots,X^{(m)}$. Let $C$ denote the event that their trajectories form a cycle-free graph and that for all $1\leq k\leq m$, $X^{(k)}_{t/2}\in\cH_i$, $Y^{(k)}_{t/2}\in\cH_j$, and
$$
\prod_{\ell=1}^{t/2-\Lambda \alpha^{-1}} \frac{1}{\deg(X^{(k)}_\ell)}\prod_{\ell=1}^{t/2-\Lambda \alpha^{-1}}\frac{1}{\deg(Y^{(k)}_\ell)}>\theta\, ,
$$ 
for some constant $\Lambda>0$ to be specified later (note that by our assumption on $\alpha$, the term $\alpha^{-1}$ grows much more slowly than the window of order $\sqrt{\frac{\log N}{\alpha}}$). Then, $P\left(C \given G \right)\geq \left(\overline{\W}^\theta_{i,j}\right)^m$, and
\begin{eqnarray*}
\P\left(\overline{\W}^\theta_{i,j}>\frac{N_i N_j}{N^2}\overline{\Phi}(\lambda)+\varepsilon\right) & \leq & \frac{\P\left(C\right)}{\left(\frac{N_i N_j}{N^2}\overline{\Phi}(\lambda)+\varepsilon\right)^m}.
\end{eqnarray*}
Generate the $2m$ walks $X^{(1)},Y^{(1)},\ldots,X^{(m)},Y^{(m)}$ one after the other along with the underlying types and pairs, as above. Given that the first $\ell -1$ pairs already satisfy the desired property, the conditional chance that $X^{(\ell)},Y^{(\ell)}$ also does is at most $\frac{N_i N_j}{N^2}\overline{\Phi}(\lambda)+\varepsilon/2$, uniformly in $1\leq \ell \leq m$. Indeed,
\begin{itemize}
\item either one of the two walks attains length $s=\lceil 4\log\log N\rceil$ before leaving the graph spanned by the first $2(\ell-1)$ trajectories and reaching an unpaired half-edge: the conditional chance is at most $2m2^{-s}=o(1)$.
\item or they both leave the graph before $s$: the remainder of their trajectory can then be coupled with $(X^\star_k)$ and $(Y^\star_k)$ for a total-variation cost of $O(mt^2/N)$. Thus, it is enough to bound, uniformly in $x,y,z,z'\in\cH$,
$$
\P_{x,y}\left(\prod_{k=1}^{t_\star/2}\deg(X^\star_k)\deg(Y^\star_k)<\frac{1}{\theta}\right)\P_z\left(X^\star_{\Lambda\alpha^{-1}}\in\cH_i\right)\P_{z'}\left(X^\star_{\Lambda\alpha^{-1}}\in\cH_j\right)\, ,
$$
where $t_\star/2=t/2-s-\Lambda \alpha^{-1}$. By~\eqref{eq:prob-coupling}, the constant $\Lambda$ can be chosen large enough so that for all $z\in\cH$, $\P_z\left(X^\star_{\Lambda\alpha^{-1}}\in\cH_i\right)\leq \frac{N_i}{N}+\varepsilon/8$. Also, it is note hard to check that the mixing time of $(X^\star_k)$ is of order $1/\alpha$, and, since $1/\alpha\ll t_\star$, the total-variation distance between the law of $X^\star_{t_\star/2+1}$ and the law $Y^\star_{t_\star/2}$ is $o(1)$. And by reversibility of the chain $(X^\star_k)$, we can write
$$
\P_{x,y}\left(\prod_{k=1}^{t_\star/2}\deg(X^\star_k)\deg(Y^\star_k)<\frac{1}{\theta}\right)\leq \P_{x}\left(\prod_{k=1}^{t_\star}\deg(X^\star_k)<\frac{1}{\theta}\right)+o(1)\, .
$$
Finally, by Lemma~\ref{lem:berry-esseen}, 
$$
\P_{x}\left(\prod_{k=1}^{t_\star}\deg(X^\star_k)<\frac{1}{\theta}\right)=\overline{\Phi}(\lambda)+o(1)\, .
$$
\end{itemize}
\end{proof}

\section{Proof of Theorem~\ref{thm:no-cutoff}}
\label{sec:no-cutoff}

In this whole section, we now assume that $\alpha =O(\frac{1}{\log N})$. However, both for the lower and upper bounds, we will use different kinds of arguments according to the more precise decay of $\alpha$ in this regime. 

Let us define the two probability measures $\pi_0$ and $\pi_1$ on $\cH$ by
$$
\pi_0(x)=
\begin{cases}
\frac{1}{N_0} & \mbox{if $x\in\cH_0$,}\\
0 & \mbox{otherwise,}
\end{cases}
\quad 
\text{and }
\quad 
\pi_1(x)=
\begin{cases}
\frac{1}{N_1} & \mbox{if $x\in\cH_1$,}\\
0 & \mbox{otherwise.}
\end{cases}
$$
For $i=0,1$, let us denote by $\cO_i$ the set of outgoing half-edges of $\cH_i$, and let $\tau_i=\min\{t\geq 0,\, X_t\in \cO_i\}$.
Without loss of generality, we consider starting points in $\cH_0$. 

Let us first state a lemma that will be used both for the upper and lower bound. 

\begin{lemma}\label{lem:mix-one-community}
Assume $\alpha\ll \frac{1}{\log N}$. Then, for $s=2\log N$, for all $x\in\cH_0$,
\[
\| \P_x^G(X_s\in\cdot)-\pi_0\|_\tv = o_\P(1)\, .
\]
\end{lemma}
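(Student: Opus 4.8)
The goal is to show that after $s = 2\log N$ steps a non-backtracking walk started inside $\cH_0$ has, with high probability over the graph, essentially mixed \emph{within} its own community, i.e.\ it is close in total variation to the uniform distribution $\pi_0$ on $\cH_0$. The natural strategy is to reduce to the one-community cutoff result of B.\ and Salez \citep{ben-salez}: in the regime $\alpha \ll 1/\log N$ the walk of length $s = 2\log N$ typically never uses an outgoing half-edge at all, so it behaves like a \nbrw\ on the internal graph of $\cH_0$, which is itself (conditionally) a configuration model on the internal half-edges of $\cH_0$ with $|\cH_0| - p = N_0 - p \asymp N$ vertices-worth of half-edges and bounded degrees. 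Since $2\log N / \mu_0 \cdot \mu_0 \approx 2\log N$ comfortably exceeds $\frac{\log N_0}{\mu_0}(1+o(1))$, the one-community cutoff at $\frac{\log N_0}{\mu_0}$ with window $O(\sqrt{\log N})$ gives $\| \P_x^{G_{\mathrm{int}}}(X_s \in \cdot) - \pi_0 \|_\tv = o_\P(1)$, where $G_{\mathrm{int}}$ denotes the subgraph obtained by keeping only internal pairings.

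First I would make precise the coupling between the \nbrw\ on $G$ and the \nbrw\ on $G_{\mathrm{int}}$. Run both walks from $x$ with the same randomness; they agree until the first time the walk on $G$ reaches an outgoing half-edge. Using the annealed description from Section~\ref{sec:coupling} (or directly: each step independently has conditional probability at most $\max_i \alpha_i^{(k)} \le \alpha + O(k/N)$ of landing on an outgoing half-edge), a union bound over $s = 2\log N$ steps bounds the probability of ever touching $\cO_0 \cup \cO_1$ by $O(\alpha \log N) + O((\log N)^2/N) = o(1)$ since $\alpha \ll 1/\log N$. This can be turned into an annealed statement, $\E \| \P_x^G(X_s \in \cdot) - \P_x^{G_{\mathrm{int}}}(X_s \in \cdot) \|_\tv = o(1)$, hence $o_\P(1)$ by Markov. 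The only subtlety is that ``reaching an outgoing half-edge'' must be controlled uniformly over the starting point, but since the bound does not depend on $x$ this is automatic.

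Then I would invoke \citep{ben-salez} on $G_{\mathrm{int}}$. Two points need checking: (i) the assumptions of that paper hold for the internal configuration model on $\cH_0$ — degrees lie in $\{3,\dots,\Delta\}$ by \eqref{eq:assumption-min-degree}--\eqref{eq:assumption-Delta}, the number of half-edges is $N_0 - p \asymp N$ by \eqref{eq:assumption-N0-N1} and \eqref{eq:assumption:p}, and the logarithmic-degree variance $\sigma_0^2$ is bounded below (this follows from $\liminf\sigma^2>0$ together with $N_0\asymp N_1\asymp N$; if one worries that a pathological degree split could make $\sigma_0^2$ vanish while $\sigma^2$ stays positive, one can instead note that $\frac{N_0}{N}\sigma_0^2+\frac{N_1}{N}\sigma_1^2+\frac{N_0N_1}{N^2}(\mu_0-\mu_1)^2=\sigma^2$, so a vanishing $\sigma_0^2$ forces a bounded-below $(\mu_0-\mu_1)^2$, and one adapts the argument slightly — but under \eqref{eq:assumption-min-degree}--\eqref{eq:assumption-Delta} with bounded degrees $\sigma_0^2$ is in fact bounded below whenever the degree sequence is non-constant, which is needed for cutoff anyway); (ii) removing $p$ half-edges to be outgoing does not break the configuration-model structure — conditionally on which half-edges are internal, the internal pairing is uniform, so $G_{\mathrm{int}}$ restricted to $\cH_0$ is exactly a configuration model, and the outgoing half-edges of $\cH_0$ simply become degree-reduced or are never visited by the coupled walk of length $s$. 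Given these, \citep{ben-salez} yields cutoff for the \nbrw\ on the internal graph at time $\frac{\log(N_0-p)}{\mu_0}(1+o(1)) = \frac{\log N}{\mu_0}(1+o(1)) \le (1+o(1))\frac{\log N}{\log 3}$, which is $< 2\log N = s$ for $N$ large since $\mu_0 \ge \log 3 > \tfrac12$; hence $\| \P_x^{G_{\mathrm{int}}}(X_s \in \cdot) - \pi_0 \|_\tv = o_\P(1)$ uniformly in $x \in \cH_0$, after the preliminary reduction to roots exactly as in Lemma~\ref{lem:root}.

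The main obstacle is not any single estimate but the bookkeeping around the decomposition of $G$ into its internal and crossing parts: one must confirm that conditioning on the (random) set of outgoing half-edges of $\cH_0$ leaves a genuine configuration model on the internal half-edges with the right number of half-edges and the degree/variance conditions of \citep{ben-salez}, and that the worst-case-over-$x$ mixing statement survives the coupling. Both are routine given the model's construction — the outgoing set is chosen first and independently of the pairings — so the lemma follows by combining the $O(\alpha\log N)$ escape bound with the cited cutoff theorem applied inside $\cH_0$.
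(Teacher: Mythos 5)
Your overall strategy is the same as the paper's (compare the walk on $G$ with a \nbrw\ on a one-community configuration model, invoke the cutoff theorem of \citep{ben-salez} to get mixing by time $2\log N$, and bound the probability of touching a crossing half-edge in $s=2\log N$ steps by $O(\alpha\log N)=o(1)$), but your choice of comparison graph creates a genuine gap. You compare with $G_{\mathrm{int}}$, the graph obtained by \emph{deleting} the outgoing half-edges of $\cH_0$. That graph is a configuration model only for a \emph{random, reduced} degree sequence: every vertex that had one or more of its half-edges selected as outgoing loses them, and such a vertex can end up with degree $2$, $1$ or even $0$, so assumption \eqref{eq:assumption-min-degree} (and hence the hypotheses under which you cite \citep{ben-salez}) fails for $G_{\mathrm{int}}$. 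Your dismissal that these vertices are ``degree-reduced or never visited by the coupled walk'' does not close this: the walk on $G_{\mathrm{int}}$ can perfectly well visit a degree-reduced vertex without the $G$-walk ever using a crossing edge (the coupling only controls the event of stepping onto an outgoing half-edge, not of visiting a vertex that owns one), so you really do need the cited theorem to hold for the reduced graph, which is not what it states. There is also a smaller mismatch you would have to patch: the stationary law of the \nbrw\ on $G_{\mathrm{int}}$ is uniform on the $N_0-p$ internal half-edges, not $\pi_0$ (a $TV$ cost of $\alpha_0=o(1)$, harmless but needed). The paper avoids all of this with a different construction: it \emph{re-pairs} the $p$ outgoing half-edges of $\cH_0$ uniformly among themselves (this is exactly where the evenness of $p$ is used), producing a graph $G_0$ on all of $\cH_0$ which, because the outgoing set was uniformly chosen, is distributed exactly as the uniform configuration model on $\cH_0$ with the \emph{original} degree sequence; \citep{ben-salez} then applies verbatim, and the two walks agree until the $G$-walk hits $\cO_0$, an event of probability $O(s\alpha)=o(1)$ in expectation.

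Two secondary points. Your worry about $\sigma_0^2$ is resolved incorrectly: it is not true that bounded degrees plus a non-constant global degree sequence force $\liminf\sigma_0^2>0$ (take $\cH_0$ entirely of degree $3$ and $\cH_1$ entirely of degree $4$: then $\sigma_0^2=0$ while $\sigma^2$ is bounded below via the $(\mu_0-\mu_1)^2$ term). Fortunately no variance lower bound is needed here: all the lemma uses from \citep{ben-salez} is that the mixing time on the one-community model is $(1+o(1))\frac{\log N_0}{\mu_0}$, and since half-edge degrees are at least $2$ (vertex degrees at least $3$), $\mu_0\geq\log 2$, so this is at most about $1.45\log N<s$; your constant $\log 3$ conflates vertex and half-edge degrees, though the conclusion survives. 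If you replace $G_{\mathrm{int}}$ by the re-paired graph $G_0$ (or, alternatively, supply a separate argument that a configuration model with $o(N/\log N)$ randomly degree-reduced vertices still mixes by time $2\log N$ uniformly in the starting point), your proof becomes essentially the paper's.
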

\begin{proof}[Proof of Lemma~\ref{lem:mix-one-community}]
Let us define the random graph $G_0$ formed with the half-edges of $\cH_0$ as follows: the internal half-edges of $\cH_0$ are paired exactly as in $G$, and the outgoing half-edges of $\cH_0$ are paired uniformly at random within each other (recall that $p$ is even). Since outgoing half-edges are chosen uniformly at random, the graph $G_0$ is exactly distributed according to the configuration model on $\cH_0$ with uniform pairing. By the triangle inequality,
\[
\| \P_x^G(X_{s}\in\cdot)-\pi_0\|_\tv \leq \| \P_x^G(X_{s}\in\cdot)-\P_x^{G_0}(X_{s}\in\cdot)\|_\tv +\| \P_x^{G_0}(X_{s}\in\cdot)-\pi_0\|_\tv 
\]
By Theorem 1.1 of~\citep{ben-salez}, with high probability, the \nbrw\ on $G_0$ has (worst case) cutoff at time $\frac{\log N_0}{\mu_0}$. Since all degrees are at least $3$, $\mu_0^{-1}\leq (\log 2)^{-1}< 2$. Hence for $s=2\log N$, 
\[
\max_{x\in\cH_0}\| \P_x^{G_0}(X_{s}\in\cdot)-\pi_0\|_\tv =o_\P(1)\, .
\]
On the other hand,
\begin{equation}\label{eq:no-passage}
\| \P_x^G(X_{s}\in\cdot)-\P_x^{G_0}(X_{s}\in\cdot)\|_\tv\leq \P^G_x(\tau_0 <s)=o_\P(1)\, ,
\end{equation}
since $\E\left[ \P^G_x(\tau_0 <s) \right]=O(s\alpha)=o(1)$, by our assumption on $\alpha$. 
\end{proof}

\begin{remark}\label{rem}
Note that the proof of Lemma~\ref{lem:mix-one-community} established that for all $x\in\cH_0$, $\| \P_x^G(X_s\in\cdot)-\pi_0\|_\tv \leq D_x$ where $D_x$ is a random variable that is determined only by the matching on the internal half-edges of $\cH_0$, and that $\max_{x\in\cH_0}\E[D_x]=o(1)$. 
\end{remark}

\subsection{Lower bound}
\label{subsec:lower-no-cutoff}

The proof of the lower bound is divided into two parts: first, we consider $\alpha\gg \frac{1}{\sqrt{N}}$. In this case, the coupling of Section~\ref{sec:coupling} still holds up to the mixing time and a tighter dependence in $\varepsilon$ can be obtained for $\tmix^{(x)}(\varepsilon)$. Then, using a simple conductance argument, we establish a lower bound of order $1/\alpha$ (but with looser dependence in $\varepsilon$), which holds as soon as $\alpha\ll \frac{1}{\log N}$.\\

\subsubsection{{\bf The case $\alpha\gg \frac{1}{\sqrt{N}}$}}

In this section, we show that if $\alpha\gg \frac{1}{\sqrt{N}}$, then for all $x\in\cH_0$,  
\[
\tmix^{(x)}(\varepsilon) \geq \frac{1+o_\P(1)}{\alpha}\log\left(\frac{N_1}{N\varepsilon}\right)\, . 
\]
Let $\delta>0$ and $t=\frac{1}{\alpha}  \log\left(\frac{N_1}{N(\varepsilon+\delta)}\right)$. We have
\[
\cD_x(t)\geq \P_x^G(X_t\in\cH_0)-\pi(\cH_0)=\P_x^G(X_t\in\cH_0)-\frac{N_0}{N}\, \cdot 
\]
Since $t\ll \sqrt{N}$, the coupling of Section~\ref{sec:coupling} holds up to $t$ with high probability, and equation~\eqref{eq:prob-coupling} gives
\[
\E\left[\P_x^G(X_t\in\cH_0)\right]=\P_x(X_t^\star\in\cH_0)+o(1)=\frac{N_0}{N}+\frac{N_1}{N}(1-\alpha)^t +o(1)\, .
\]
For large enough $N$, 
$$\E\left[\P_x^G(X_t\in\cH_0)\right]\geq \frac{N_0}{N}+\varepsilon +\frac{\delta}{2}\, \cdot $$
Hence, by Tchebytchev Inequality,
\[
\P\left(\cD_x(t)\leq \varepsilon\right)\leq \frac{4 \var\left( \P_x^G(X_t\in\cH_0) \right)}{\delta^2}\, \cdot 
\]
To conclude the lower bound, let us show that $\var\left( \P_x^G(X_t\in\cH_0)\right)=o(1)$. The second moment $\E\left[ \P_x^G(X_t\in\cH_0)^2\right]$ corresponds to the annealed probability that two independent \nbrw\ started at $x$ are in $\cH_0$ at time $t$. Generating the two walks one after the other, and noticing that the probability that the first exits $\cH_0$ before time $s=\lfloor \log\log N\rfloor$, or that the second follows the first for more than $s$ steps is $o(1)$, we obtain
\begin{align*}
\E\left[ \P_x^G(X_t\in\cH_0)^2\right] & =\left(\frac{N_0}{N}+\frac{N_1}{N}(1-\alpha)^t +o(1)\right)\left(\frac{N_0}{N}+\frac{N_1}{N}(1-\alpha)^{t-s} +o(1)\right)\\
&=\E\left[ \P_x^G(X_t\in\cH_0)\right]^2 +o(1)\, .
\end{align*}

\subsubsection{{\bf The case $\alpha\ll \frac{1}{\log(N)}$}}

Let us recall that the conductance $\bphi(S)$ of a set $S\subset \cH$ is defined as
 $$
 \bphi(S)=\frac{\sum_{x\in S}\sum_{y\in S^c}\pi(x)P(x,y)}{\sum_{x\in S}\pi(x)}\, \cdot 
 $$
 Observe that
 $$
 \bphi(\cH_0)=\alpha_0\, ,\quad\mbox{and}\quad \bphi(\cH_1)=\alpha_1\, \cdot 
 $$
By the triangle inequality, we have, for all $t\geq 0$,
$$
\cD_x(t)\geq \| \P_{\pi_0}^G(X_t\in \cdot) -\pi\|_\tv - \| \P_{x}^G(X_t\in \cdot) - \P_{\pi_0}^G(X_t\in \cdot) \|_\tv\, .
$$
On the one hand, for $s=2\log N$, and $t\geq s$,
\[
\| \P_{x}^G(X_t\in \cdot) - \P_{\pi_0}^G(X_t\in \cdot) \|_\tv \leq \| \P_{x}^G(X_s\in \cdot) - \P_{\pi_0}^G(X_s\in \cdot) \|_\tv=o_\P(1)\, .
\]
Indeed, by Lemma~\ref{lem:mix-one-community}, $ \| \P_{x}^G(X_s\in \cdot) - \pi_0 \|_\tv=o_\P(1)$, and, by equation~\ref{eq:no-passage},
\[
 \| \P_{\pi_0}^G(X_s\in \cdot) -\pi_0 \|_\tv\leq \P_{\pi_0}^G(\tau_0<s)=o(1)\, .
 \]
On the other hand,  by~\citep[equation (7.10)]{levin2017markov},
$$
\| \P_{\pi_0}^G(X_t\in \cdot) -\pi\|_\tv\geq \frac{N_1}{N}-\P_{\pi_0}^G(X_t\in \cH_1)\geq \frac{N_1}{N}- t\bphi(\cH_0)\, .
$$
Hence, for all $x\in\cH_0$, 
\[
\tmix^{(x)}(\varepsilon) \geq \frac{1}{\alpha_0}\left(\frac{N_1}{N}-\varepsilon-o_\P(1)\right)\, .
\]

\subsection{Upper bound}
\label{subsec:upper-no-cutoff}

As for the lower bound, the proof of the upper bound uses different arguments according to the decay of $\alpha$. We first consider $\alpha\ll \frac{1}{\log N}$, and then $\alpha\asymp \frac{1}{\log N}$. \\

\subsubsection{{\bf The case $\alpha\ll \frac{1}{\log(N)}$}}

Let $t=\frac{A}{\alpha\varepsilon}$ with $A>0$ some large constant to be specified later, and $s=\lceil 2\log N\rceil $. By the triangle inequality, for all $x\in\cH_0$,

\begin{eqnarray*}
\cD_{x}(t+2s)&\leq & \|\P_x^G(X_s\in\cdot)-\pi_0\|_\tv + \|\P_{\pi_0}^G(X_{t+s}\in\cdot)-\pi\|_\tv\, .
\end{eqnarray*}

By Lemma~\ref{lem:mix-one-community}, $\|\P_x^G(X_s\in\cdot)-\pi_0\|_\tv=o_\P(1)$.
Now observe that
\[
\|\P_{\pi_0}^G(X_{t+s}\in\cdot)-\pi\|_\tv =\frac{N_1}{N}\|\P_{\pi_0}^G(X_{t+s}\in\cdot)-\P_{\pi_1}^G(X_{t+s}\in\cdot)\|_\tv\, .
\]
The rest of the proof now follows from a coupling argument. Let $(X_k)$ and $(Y_k)$ be two random walks started at $\pi_0$ and $\pi_1$ respectively. First the two random walks evolve independently until the first time $\tau$ when they are in the same community. From time $\tau$, if they are both in $\cH_i$, if $X_\tau=x$ and $Y_\tau=y$, then the walks are coupled according to the otimal coupling for the distance at time $s$, that is, the coupling which attains $\P^{G}_{x,y}(X_s\neq Y_s)=\| \P_{x}^{G}(X_s\in\cdot)- \P_{y}^{G}(Y_s\in\cdot)\|_\tv$. If one of them switch community before they have met, then we repeat the same coupling, until they meet. Once they meet, they evolve together. By the strong Markov property, we have 
\begin{align*}
&\|\P_{\pi_0}^G(X_{t+s}\in\cdot)-\P_{\pi_1}^G(Y_{t+s}\in\cdot)\|_\tv\\
&\quad \leq \P^G_{\pi_0,\pi_1}(\tau>t) +\sum_{k=1}^t\sum_{x,y\in\cH} \P^G_{\pi_0,\pi_1}(\tau=k, X_\tau=x, Y_\tau=y)\P^G_{x,y}(X_{t+s-k}\neq Y_{t+s-k})\\
&\quad \leq \P^G_{\pi_0,\pi_1}(\tau>t) + \sum_{x,y\in\cH} \P^G_{\pi_0,\pi_1}(X_\tau=x, Y_\tau=y)\P^G_{x,y}(X_{s}\neq Y_{s})\, .
\end{align*}
Now,
\[
\P^G_{\pi_0,\pi_1}(\tau>t)\leq \P_{\pi_0}^G(\tau_0\geq t)+ \P_{\pi_1}^G(\tau_1\geq t) +\P^G_{\pi_0,\pi_1}(\tau_0=\tau_1)\, .
\]
Clearly, $\P^G_{\pi_0,\pi_1}(\tau_0=\tau_1)=o_\P(1)$ since the expectation is $O(\alpha)$. The following lemma states that, by times of order $1/\alpha$, both walks have hit an outgoing half-edge.

\begin{lemma}\label{lem:passage}
For $t=\frac{A}{\alpha\varepsilon}$ with $A>0$ large enough, 
\[
\P_{\pi_0}^G(\tau_0\geq t)\leq \varepsilon+o_\P(1) \quad \mbox{and} \quad \P_{\pi_1}^G(\tau_1\geq t)\leq \varepsilon+o_\P(1)\, .
\]
\end{lemma}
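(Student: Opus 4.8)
The plan is to reduce the claim to the one-community configuration model and then control the hitting time of $\cO_0$ by running the walk in blocks of length $\Theta(\log N)$. First I would observe that before time $\tau_0$ the walk visits only internal half-edges of $\cH_0$, so its trajectory up to $\tau_0$ is a deterministic function of the starting point, the internal pairing of $\cH_0$, the identity of $\cO_0$, and its own coin tosses. Since the joint law of the internal pairing of $\cH_0$ together with $\cO_0$ is the same under $G$ and under the auxiliary graph $G_0$ of Lemma~\ref{lem:mix-one-community}, this gives $\P^G_{\pi_0}(\tau_0\ge t)=\P^{G_0}_{\pi_0}(\tau_0\ge t)$ as random variables, where now $\tau_0$ is the hitting time of $\cO_0$ by the \nbrw\ on the uniform configuration model $G_0$ on $\cH_0$ and, conditionally on $G_0$, the set $\cO_0$ is a uniformly random union of $p/2$ of the $N_0/2$ edges of $G_0$, independent of the walk.

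Next I would prove a one-block estimate: with $s=\lceil 2\log N\rceil$, that with probability $1-o(1)$ over $(G_0,\cO_0)$ one has $\sup_{x\in\cH_0}\P^{G_0}_x(\tau_0\ge s)\le 1-c\,\alpha\log N$ for a constant $c=c(\Delta)>0$. There are two ingredients. The first is that in $s$ steps the walk crosses $\Omega(\log N)$ distinct edges: by the argument of Lemma~\ref{lem:root} (applied to $G_0$), with probability $1-o(1)$ every $x$ reaches a root within $\lfloor\log\log N\rfloor$ steps with walk-probability at least $3/4$, and from a root the walk stays inside a tree ball of radius $R=\lceil\frac{\log N}{6\log\Delta}\rceil$, hence crosses $R$ distinct edges; since $\lfloor\log\log N\rfloor+R\le s$, this yields at least $R\ge\frac{\log N}{6\log\Delta}$ distinct edges within $s$ steps, with walk-probability at least $3/4$, uniformly in $x$. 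The second is that $\{\tau_0<s\}$ is exactly the event that one of those distinct edges is outgoing, and since given $G_0$ the set $\cO_0$ is a uniform $(p/2)$-subset of the $N_0/2$ edges independent of the walk, the probability that it avoids a prescribed set of $\ell$ edges is $\binom{N_0/2-\ell}{p/2}/\binom{N_0/2}{p/2}\le e^{-\ell\alpha_0}$. Combining, and using $\alpha_0\asymp\alpha\ll 1/\log N$ — so $\alpha_0\log N\to0$ and $1-e^{-R\alpha_0}\ge\tfrac12 R\alpha_0$ — produces the block bound.

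Then I would iterate: by the Markov property $\P^{G_0}_{\pi_0}(\tau_0\ge ks)\le(1-c\,\alpha\log N)^k$ on the good event, so for $t=A/(\alpha\varepsilon)$ and $k=\lfloor t/s\rfloor$ we get $\P^{G_0}_{\pi_0}(\tau_0\ge t)\le\exp(-\tfrac c2\alpha t+o(1))=(1+o(1))e^{-cA/(2\varepsilon)}\le\varepsilon$ as soon as $A\ge A_0(\Delta)$, because $\alpha t=A/\varepsilon$ and $e^{-x}\le x^{-1}$. On the complementary event, which has probability $o(1)$, the probability is at most $1$; transferring back to $G$ by the first step then gives $\P^G_{\pi_0}(\tau_0\ge t)\le\varepsilon+o_\P(1)$, and the statement for $\tau_1$ is identical with the two communities interchanged.

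The hard part will be the one-block estimate, and in particular the factor $\log N$ it carries. A crude count only shows that one \emph{fresh} half-edge is outgoing with probability $\approx\alpha$ per step, giving survival $\approx(1-\alpha)^t$ and nothing useful at $t=A/(\alpha\varepsilon)$; what one must exploit is that in $\Theta(\log N)$ steps the walk sees $\Theta(\log N)$ \emph{distinct} edges, so each block meets $\cO_0$ with probability $\Theta(\alpha\log N)$, which is exactly why $t=\Theta(1/(\alpha\varepsilon))$ — and not $t=\Theta(\log N/(\alpha\varepsilon))$ — is the right scale. A secondary, more routine, obstacle is that one cannot route this through the coupling with $X^\star$ of Section~\ref{sec:coupling}, which is only valid up to times $\ll\sqrt N$, i.e. for $\alpha\gg1/\sqrt N$, whereas here $\alpha$ can be as small as $\asymp1/N$; working directly on $G_0$ with the genuinely random set $\cO_0$ avoids this.
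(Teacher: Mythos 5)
Your reduction to $G_0$ and the observation that, conditionally on $G_0$, the set $\cO_0$ is a uniform union of $p/2$ of the $N_0/2$ edges, independent of the walk's coins, are both correct, and the root/tree argument does give $R\geq \frac{\log N}{6\log\Delta}$ distinct edges crossed within a block of length $s$. The gap is in the one-block estimate and, consequently, in the iteration. What your ``combining'' step actually proves is an \emph{annealed} bound: for each fixed $x$, $\E_{\cO_0}\bigl[\P^{G_0,\cO_0}_x(\tau_0\geq s)\bigr]\leq 1-c\,\alpha\log N$ on the high-probability event for $G_0$ --- the factor $e^{-\ell\alpha_0}$ is a probability over $\cO_0$, so the random set has been averaged out. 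This does not yield the quenched statement you then iterate, namely that with probability $1-o(1)$ over $(G_0,\cO_0)$ one has $\sup_{x}\P^{G_0,\cO_0}_x(\tau_0\geq s)\leq 1-c\,\alpha\log N$. Because the bound is of the form $1-o(1)$, Markov's inequality gives nothing useful even for a single $x$ (let alone after a union bound over $N_0$ starting points); and in the extreme regime $\alpha\asymp 1/N$, where $\cO_0$ consists of a bounded number of edges, the uniform quenched block bound amounts to a pointwise lower bound of order $\frac{\log N}{N}$ on the quenched probability of hitting a few prescribed half-edges within $2\log N$ steps from \emph{every} starting point --- a delocalization statement that is genuinely harder than anything your argument establishes. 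Without it, the step ``$\P^{G_0}_{\pi_0}(\tau_0\geq ks)\leq(1-c\,\alpha\log N)^k$ by the Markov property'' is unjustified.

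The natural repair inside your framework is to stay annealed in $\cO_0$ throughout: conditioning on the whole trajectory, $\P(\cO_0\cap E_{[0,t)}=\emptyset)\leq e^{-\alpha_0|E_{[0,t)}|}$ where $E_{[0,t)}$ is the set of distinct edges crossed by time $t$, so one needs the walk on $G_0$ to cross $\Omega(1/\alpha_0)$ \emph{distinct} edges by time $t=\frac{A}{\alpha\varepsilon}$ with probability at least $1-\varepsilon/2$. Your block argument only produces edges distinct \emph{within} each block; across blocks they may repeat, and since $t$ can be of order $N$ here, controlling the walk's self-intersections over such time scales is a substantial extra task, not a routine one. The paper sidesteps all of this: it only needs the stationary start $\pi_0$, writes $\P_{\pi_0}^{G_0}(\tau_0\geq t)\leq\P_{\pi_0}^{G_0}\bigl(\tfrac1t\sum_{i<t}f(X_i)\geq\alpha_0\bigr)$ with $f=\alpha_0-\ind_{\cO_0}$, and bounds the correlations $\E_{\pi_0}^{G_0}[f(X_0)f(X_k)]$ via the Poincar\'e constant of the \nbrw\ on $G_0$, which is bounded away from $0$ with high probability by a Cheeger/conductance argument. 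This gives the quenched estimate $\P_{\pi_0}^{G_0}(\tau_0\geq t)=O\bigl(\frac{1}{t\alpha_0}\bigr)\leq\varepsilon$ directly, with no uniformity in the starting point and no edge-counting needed; that is the key idea your proposal is missing.
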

\begin{proof}[Proof of Lemma~\ref{lem:passage}]
First note that
$$\P_{\pi_0}^G(\tau_0\geq t)=\P_{x_0}^{G_0}(\tau_0\geq t)\, .$$ 
Now let $f$ be the function defined on $\cH_0$ by $f(z)=\alpha_0-\ind_{z\in\cO_0}$. Note that 
\[
\E_{\pi_0}^{G_0} f=0\, \quad\text{ and } \E_{\pi_0}^{G_0}f^2=\alpha_0(1-\alpha_0)\leq\alpha_0\, .
\]
We have
\[
\P_{\pi_0}^{G_0}(\tau_0\geq t)=\P_{\pi_0}^{G_0}\left(\frac{1}{t}\sum_{i=0}^{t-1} \ind_{X_i\in\cO_0}=0\right)\leq \P_{\pi_0}^{G_0}\left(\frac{1}{t}\sum_{i=0}^{t-1} f(X_i)\geq \alpha_0\right)\, ,
\]
and by Markov Inequality and Cauchy-Schwarz Inequality,
\begin{align*}
\P_{\pi_0}^{G_0}\left(\frac{1}{t}\sum_{i=0}^{t-1} f(X_i)\geq \alpha_0\right)& \leq \frac{1}{(t\alpha_0)^2} \left(t \E_{\pi_0}^{G_0}f^2+\sum_{k=1}^{t-1}(t-k) \E_{\pi_0}^{G_0}\left[ f(X_0)f(X_k)\right]\right)\\
&\leq \frac{1}{t\alpha_0} +\frac{1}{t\alpha_0^2}\sum_{k=1}^{t-1} \sqrt{\E_{\pi_0}^{G_0}[f^2] \E_{\pi_0}^{G_0}[f(X_k)^2]}\, .
\end{align*}
Now, by contraction, $\E_{\pi_0}^{G_0}[f(X_k)^2]\leq (1-\gamma_0)^t \E_{\pi_0}^{G_0}[f^2]$, where $\gamma_0$ is the Poincar\'e constant of the \nbrw\ on $G_0$. Using Cheeger Inequality and a conductance argument (as in \citep[Lemma 3.5]{benjamini2014mixing} for the conductance of the simple random walk), we have that with high probability, $\gamma_0$ is bounded away from $0$. Hence 
\[
\P_{\pi_0}^{G_0}(\tau_0\geq t)\leq \frac{1}{t\alpha_0} +\frac{1}{t\alpha_0}\sum_{k=1}^{t-1}(1-\gamma_0)^{t/2}\leq \frac{1}{t\alpha_0}\left(1+\frac{1}{1-\sqrt{1-\gamma_0}}\right)\, .
\]
For $t=\frac{A}{\alpha\varepsilon}$ and $A$ large enough, $\P_{\pi_0}^{G_0}(\tau_0\geq t)\leq \varepsilon$.
\end{proof}

To conclude the proof, we need to show that, with high probability, once the walks are in the same community, they couple in logarithmic time. As already observed, $\P^G_{\pi_0,\pi_1}(\tau_0=\tau_1)=o_\P(1)$, and on the event $\tau_0\neq \tau_1$, we can write
\begin{align*}
& \sum_{x,y\in\cH} \P^G_{\pi_0,\pi_1}(X_\tau=x, Y_\tau=y, \tau_0\neq\tau_1)\P^G_{x,y}(X_{s}\neq Y_{s})\\
& \quad =\sum_{x,y\in\cH_0}\frac{1}{N_0}\P^G_{\pi_1}(Y_{\tau_1+1}=y)\P_{x,y}^G(X_s\neq Y_s) + \sum_{x,y\in\cH_1}\frac{1}{N_1}\P^G_{\pi_0}(X_{\tau_0+1}=x)\P_{x,y}^G(X_s\neq Y_s)\, .
\end{align*}
For $x,y\in\cH_0$, using the definition of the coupling, the triangle inequality, and Remark~\ref{rem},
\begin{align*}
\P_{x,y}^G(X_s\neq Y_s)&=\| \P_{x}^{G}(X_s\in\cdot)- \P_{y}^{G}(Y_s\in\cdot)\|_\tv\leq D_x +D_y\, .
\end{align*}
Using the fact that, for $z\in\cH_1$, the variable $\P^G_{\pi_1}(Y_{\tau_1}=z)$ is independent from the matching on the internal half-edges of $\cH_0$, we obtain
\begin{align*}
\E\left[\sum_{x,y\in\cH_0}\frac{\P^G_{\pi_1}(Y_{\tau_1+1}=y)}{N_0}\P_{x,y}^G(X_s\neq Y_s)\right]&\leq \E\left[ \sum_{\substack{x\in\cH_0\\ z\in\cH_1}} \frac{\P^G_{\pi_1}(Y_{\tau_1}=z)}{N_0} \sum_{y\sim \eta(z)}\frac{1}{\deg(\eta(z))}(D_x+D_y)\right]\\
& \leq \sum_{z\in\cH_1}\E\left[\P^G_{\pi_1}(Y_{\tau_1}=z)\right]\Delta\sup_{x\in\cH_0}\E[D_x]\, ,
\end{align*}
which is $o(1)$ by Lemma~\ref{lem:mix-one-community}. Similarly,
\[
\E\left[  \sum_{x,y\in\cH_1}\frac{1}{N_1}\P^G_{\pi_0}(X_{\tau_0+1}=x)\P_{x,y}^G(X_s\neq Y_s) \right]=o(1)\, ,
\]
and this concludes the proof of the upper bound when $\alpha\ll \frac{1}{\log N}$.\\

\subsubsection{{\bf The case $\alpha\asymp \frac{1}{\log(N)}$}}

Take now $\varepsilon<N_1/N$ and $t=\frac{A}{\alpha}\log(1/\varepsilon)$, for $A>0$ to be specified later. Bounding the $\ell^1(\pi)$-distance by the $\ell^2(\pi)$-distance, we have
\[
\cD_x(t)\leq \frac{1}{2}\sqrt{ N\sum_{y\in\cH}P^t(x,y)^2 -1}\, .
\]
Hence
\[
\P\left(\cD_x(t)>\varepsilon\right)\leq \P\left(\sum_{y\in\cH} P^t(x,y)^2>\frac{1+4\varepsilon^2}{N}\right)\, .
\]
Note that
\[
\E\left[\sum_{y\in\cH}P^t(x,y)^2\right]=\P_x(X_t=Y_t)\, ,
\]
where $X$ and $Y$ are two independent \nbrw\ of length $t$ started at $x$. Recall that the \emph{excess} of a graph is the maximal number of edges that can be removed from it while keeping it connected. To estimate the annealed probability that $X_t=Y_t$, we distinguish different cases:
\begin{enumerate}
\item either the graph spanned by $X$ and $Y$ has excess strictly larger than $1$: this has probability $O\left(\frac{t^4}{N^2}\right)$, which is $o(1/N)$.
\item or $Y$ follows the trajectory of $X$ up to $t$: the probability is $O(2^{-t})$ which is $o(1/N)$ for $A$ large enough. 
\item or each trajectory cycle-free and there exist $s$ and $r$ with $s+r<t$ such that $Y$ follows $X$ for the first $s$ steps, then parts and merges back with $X$ for the last $r$ steps. In this situation, if at time $s+1$ both walks are in $\cH_i$, the probability that they merge back at time $t-r+1$ can be approximated by
\end{enumerate}
\[
\left(\P_{\pi_i}(X^\star_{t-s-r}\in \cH_0)^2 +o(1)\right)\frac{1}{N_0} + \left(\P_{\pi_i}(X^\star_{t-s-r}\in \cH_1)^2 +o(1)\right)\frac{1}{N_1}
\]
Summing over $s$ and $r$ such that $s+r<t$, observing that the contribution from $s$ or $r$ larger than $\log \log N$ is negligible, and that, with high probability, for $s\leq \log\log N$, both walks are still in $\cH_0$ at time $s+1$ (recall that we assumed $x\in\cH_0$), we obtain that 
\[
\P_x(X_t=Y_t)=\left(\frac{N_0}{N}+\frac{N_1}{N}(1-\alpha)^t\right)^2\frac{1+o(1)}{N_0}
+\left(\frac{N_1}{N}-\frac{N_1}{N}(1-\alpha)^t\right)^2\frac{1+o(1)}{N_1}\, .
\]
Using that $(1-\alpha)^t= \varepsilon^A+o(1)$, we have
\[
\P_x(X_t=Y_t)= \frac{1+\frac{N_1}{N}\left(\frac{N_1}{N_0}+1\right)\varepsilon^{2A} +o(1)}{N}\leq \frac{1+2\varepsilon^2}{N}\, ,
\]
for $A$ and $N$ large enough. By Tchebychev Inequality,
\[
\P\left(\cD_x(t)>\varepsilon\right)\leq \frac{N^2}{1+2\varepsilon^2} \var\left(\sum_{y\in\cH}P^t(x,y)^2\right)\, \cdot 
\]
To conclude the proof, let us show that $\var\left(\sum_{y\in\cH}P^t(x,y)^2\right)=o(N^{-2})$. Proceeding as above, we first note that 
\[
\E\left[ \left(\sum_{y\in\cH}P^t(x,y)^2\right)^2\right]=\P_x(X_t=Y_t, X'_t=Y'_t)\, ,
\]
where $X, Y, X'$ and $Y'$ are independent \nbrw\ of length $t$ started at $x$. Let us again distinguish different cases:
\begin{enumerate}
\item either the graph spanned by the four walks has excess strictly larger than $2$: this has probability $O\left(\frac{t^6}{N^3}\right)$, which is $o(1/N^2)$.
\item or there exist one walk which evolves only on the graph spanned by the three other walks: since the excess is at most $2$, there are not many possible trajectories to follow and the probability is $O(2^{-t})$, which is $o(1/N^2)$ for $A$ large enough.
\item or in the last situation, there exist $s,r$ and $s',r'$ with $s+r<t$ and $s'+r'<t$ such that $Y$ (resp. $Y'$) follows $X$ (resp. $X'$) for the first $s$ (resp. $s'$) steps, then parts and merges back with $X$ (resp. $X'$) for the last $r$ (resp. $r'$) steps.
\end{enumerate}
Denoting by $s^\star$ the minimum between $t$ and the first time when all four walks are distinct, we observe that, when summing over $s,r,s',r'$, the contribution from $s^\star$ or $r$ $r'$ larger than $\log \log N$ is negligible, and that, with high probability, for $s^\star\leq  \log\log N$, all walks are still in $\cH_0$ at time $s^\star$, we obtain that 
\begin{align*}
\P_x(X_t=Y_t, X'_t=Y'_t)&=\left(\left(\frac{N_0}{N}+\frac{N_1}{N}(1-\alpha)^t\right)^2\frac{1+o(1)}{N_0}
+\left(\frac{N_1}{N}-\frac{N_1}{N}(1-\alpha)^t\right)^2\frac{1+o(1)}{N_1}\right)^2\\
&= \P_x(X_t=Y_t)^2 +o\left(\frac{1}{N^2}\right)\, ,
\end{align*}
and this concludes the proof.\\
\medskip

\paragraph*{{\bf Acknowledgments.}}  The author would like to thank two anonymous referees for their valuable comments and helpful suggestions.

\bibliographystyle{abbrvnat}
\bibliography{biblio}

\end{document}